\theoremstyle{plain}
\newtheorem{Thm}{Theorem}[section]
\newtheorem{Lem}[Thm]{Lemma}
\newtheorem{Prop}[Thm]{Proposition}
\newtheorem{Cor}[Thm]{Corollary}
\theoremstyle{definition}
\newtheorem{Def}[Thm]{Definition}
\newtheorem{Conj}[Thm]{Conjecture}
\newtheorem{Rem}[Thm]{Remark}
\newtheorem*{Claim}{Claim}
\newtheorem*{ack}{Acknowledgement}
\newtheorem*{que}{Question}
\newenvironment{notation}[0]{%
  \begin{list}%
    {}%
    {\setlength{\itemindent}{0pt}
     \setlength{\labelwidth}{4\parindent}
     \setlength{\labelsep}{\parindent}
     \setlength{\leftmargin}{5\parindent}
     \setlength{\itemsep}{0pt}
     }%
   }%
  {\end{list}}
\begin{document}

\title{On upper bounds of arithmetic degrees}

\author{Yohsuke Matsuzawa}
\address{Graduate school of Mathematical Sciences, the University of Tokyo, Komaba, Tokyo,
153-8914, Japan}
\email{myohsuke@ms.u-tokyo.ac.jp}

\begin{abstract}
Let $X$ be a smooth projective variety defined over $ \overline{\mathbb Q}$,
and $f\colon X \dashrightarrow X$ be a dominant rational map.
Let $\delta_{f}$ be the first dynamical degree of $f$ and 
$h_{X}\colon X( \overline{\mathbb Q}) \longrightarrow [1,\infty)$ be a Weil height function on $X$
associated with an ample divisor on $X$.
We prove several inequalities which give upper bounds of the sequence $(h_{X}(f^{n}(P)))_{n\geq0}$
where $P$ is a point of $X( \overline{\mathbb Q})$ whose forward orbit by $f$ is well-defined.
As a corollary, we prove that the upper arithmetic degree is less than or equal to the first
dynamical degree; $ \overline{\alpha}_{f}(P)\leq \delta_{f}$.
Furthermore, 
we prove the canonical height functions of rational self-maps exist under certain conditions.
For example, when the Picard number of $X$ is one, $f$ is algebraically stable and $\delta_{f}>1$,
the limit defining canonical height $\lim_{n\to \infty}h_{X}(f^{n}(P))\big/\delta_{f}^{n}$ converges.
\end{abstract}


\maketitle

\section{introduction}
Let $X$ be a smooth projective variety defined over a fixed algebraic closure $ \overline{\mathbb Q}$ of 
the field of rational numbers $ {\mathbb{Q}}$
and $f \colon X \dashrightarrow X$ a dominant rational map defined over $ \overline{\mathbb Q}$.
The (first) dynamical degree $\delta_{f}$ of $f$ is a measure of the geometric complexity of the iterates $f^{n}$ of $f$.
The dynamical degree of a dominant rational self-map on an
arbitrary smooth projective variety over $ {\mathbb{C}}$ is defined by Dinh-Sibony in \cite{ds2, ds} using K\"ahler form on $X$.
The alternating definition is introduced by Diller-Favre in \cite{df} using the linear map $f^{*}$ induced on the Neron-Severi group of $X$.
The first dynamical degree is a birational invariant of $f$ and is an important tool for the
study of dynamics of self-maps of algebraic varieties.

On the other hand, in a study of the asymptotic behavior of the Weil heights of iterations $f^{n}(P)$ where
$P\in X( \overline{\mathbb Q})$ is a point whose $f$-orbit is well-defined, Silverman introduced in \cite{s} the notion of arithmetic degree
of the orbit.
It measures the arithmetic complexity of $f$-orbits.
In \cite{s}, he expects the coincidence of the dynamical degree and the arithmetic degree of a Zariski dense orbit.
A refined version of this conjecture was formulated by Kawaguchi and Silverman in \cite{ks}.
Related topics are studied in  \cite{ks3, ks, ks2, mss, mss2, sano, s, s2}.

In this paper, we give upper bounds of heights of $f^{n}(P)$ in terms of $\delta_{f}$.
The main theorem of this paper is Theorem \ref{main} below which says the arithmetic degrees are bounded by the dynamical degree.
Actually, this theorem is stated as Theorem 1 in \cite{ks}.
However,  the proof of Theorem 1 in \cite{ks} unfortunately contains a mistake (cf.\ Remark \ref{error}). 
In this paper, we give a correct proof of Theorem 1 in \cite{ks}.

Before giving a precise statements of our main results, we recall the definition of the dynamical and
arithmetic degrees.

\begin{flushleft}
{\bf The first dynamical degree}
\end{flushleft}

Let $N^{1}(X)$ be the group of divisors on $X$ modulo numerical equivalence.
Since $X$ is smooth, this is equal to the group of codimension one cycles modulo numerical equivalence.
The group $N^{1}(X)$ is a free $ {\mathbb{Z}}$-module of finite rank.
We write $N^{1}(X)_{ {\mathbb{R}}}$ for $N^{1}(X) {\otimes}_{ {\mathbb{Z}}} {\mathbb{R}}$.
The  pull-back homomorphism $f^{*} \colon N^{1}(X) \longrightarrow N^{1}(X)$ is defined as follows.
Take a resolution of indeterminacy $g \colon Y \longrightarrow X$ of $f$ with $Y$ smooth.
Then $f^{*}D=g_{*}((f\circ g)^{*}D)$ for every $D\in N^{1}(X)$.
This is independent of the choice of the resolution.
%
%
\begin{Def}\leavevmode
\begin{enumerate}
\item For an endomorphism $\varphi$ of a finite dimensional real vector space, 
the maximum of the absolute values of  eigenvalues of $\varphi$ is called the spectral radius of $\varphi$
and denoted by $\rho(\varphi)$.
\item The first dynamical degree $\delta_{f}$ of $f$ is defined as follows:
\[
\delta_{f}=\lim_{n\to \infty}\rho((f^{n})^{*} \colon N^{1}(X)_{ {\mathbb{R}}} \longrightarrow N^{1}(X)_{ {\mathbb{R}}})^{1/n}.
\]
Note that $\delta_{f}\geq1$ since $f$ is dominant and $(f^{n})^{*}$ is a homomorphism of the
$ {\mathbb{Z}}$-module $ N^{1}(X)$.
We refer, e.g., to \cite{dang, df, tru} for basic properties of dynamical degrees. 
\end{enumerate}
\end{Def}

\begin{flushleft}
{\bf The arithmetic degree}
\end{flushleft}

The absolute logarithmic Weil height function on $ {\mathbb{P}}^{N}( \overline{\mathbb Q})$
is a function that measures the arithmetic complexity of the coordinates of points
(see for example \cite{bg,hs,l} for the definition).
If we fix an embedding $X \longrightarrow {\mathbb{P}}^{N}$, we get a height function $h_{X}$ on $X( \overline{\mathbb Q})$.

We write $h_{X}^{+}=\max\{ h_{X}, 1\}$.
Let $I_{f}$ be the indeterminacy locus of $f$.
We want to consider the orbit of a point by $f$, so we set
\[
X_{f}( \overline{\mathbb Q})=\{ P\in X( \overline{\mathbb Q})\mid \text{$f^{n}(P)\notin I_{f}$ for all $n\geq 0$}\}.
\]

 \begin{Def}
 Let $P\in X_{f}( \overline{\mathbb Q})$.
 The arithmetic degree of $P$ is 
 \[
 \alpha_{f}(P)=\lim_{n\to \infty}h_{X}^{+}(f^{n}(P))^{1/n}
 \]
 if the limit exists.
 Since it is not known wether the limit always exists, the following invariants are introduced by
 S. Kawaguchi and J. H. Silverman in \cite{ks}.
 \begin{align*}
 &\overline{\alpha}_{f}(P)=\limsup_{n\to \infty}h_{X}^{+}(f^{n}(P))^{1/n}\\
 &\underline{\alpha}_{f}(P)=\liminf_{n\to \infty}h_{X}^{+}(f^{n}(P))^{1/n}.
 \end{align*}
 These are called the upper and lower arithmetic degrees of $P$
 and do not depend on the choice of the embedding $X \longrightarrow  {\mathbb{P}}^{N}$ (see \cite[Proposition 12]{ks}).
 By definition, $1\leq \underline{\alpha}_{f}(P) \leq \overline{\alpha}_{f}(P)$.
 \end{Def}
 
 In \cite{ks}, Kawaguchi and Silverman proposed the following conjecture.
 
\begin{Conj}\label{ksconj}
Let $P \in X_{f}( \overline{\mathbb Q})$.
\begin{enumerate}
\item The limit defining $\alpha_{f}(P)$ exists.
\item The arithmetic degree $\alpha_{f}(P)$ is an algebraic integer.
\item The collections of arithmetic degrees $\{ \alpha_{f}(Q) \mid Q \in X_{f}( \overline{\mathbb Q}) \}$
is a finite set.
\item If the forward orbit $ \mathcal{O}_{f}(P)=\{ f^{n}(P) \mid n=0,1,2, \dots \}$ is Zariski dense in $X$,
then $\alpha_{f}(P)=\delta_{f}$.
\end{enumerate}
\end{Conj}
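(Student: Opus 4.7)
The plan is to combine an upper bound $\overline{\alpha}_f(P)\le\delta_f$ with a spectral decomposition of $f^*$ on $N^1(X)_{\mathbb{R}}$. The unifying idea is that every arithmetic degree should equal $|\lambda|$ for some eigenvalue $\lambda$ of $f^*$ acting on a suitable $f^*$-invariant subquotient; this single framework feeds all four parts of the conjecture.

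For the upper bound, which also underlies the polynomial-factor control needed in (1), I would first pass to a birational model on which some iterate of $f$ is algebraically stable, so that $(f^n)^*=(f^*)^n$ on $N^1$. Choosing an ample class $A$ and writing $f^*$ in real Jordan form, one obtains an inequality of the form $(f^n)^*A\preceq Cn^{k-1}\delta_f^n A'$ in the pseudoeffective cone, where $k$ is the maximal Jordan block size at an eigenvalue of modulus $\delta_f$. The height machine then gives $h_X(f^n(P))=O(n^{k-1}\delta_f^n)$, so $\overline{\alpha}_f(P)\le\delta_f$.

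For parts (1)--(3), for each generalized eigenvalue $\lambda$ of $f^*$ with $|\lambda|\ge 1$ I would construct a canonical height $\hat h_\lambda$ by the telescoping formula
\[
\hat h_\lambda(P)=\lim_{n\to\infty}\lambda^{-n}h_{D_\lambda}(f^n(P)),
\]
where $D_\lambda$ represents the top vector of the $\lambda$-block. The functional equation $\hat h_\lambda\circ f=\lambda\hat h_\lambda$ (modulo contributions from strictly smaller blocks) forces $\alpha_f(P)=|\lambda_P|$ where $\lambda_P$ is the eigenvalue of maximal modulus with $\hat h_{\lambda_P}(P)\ne 0$. Because $\lambda_P$ is an eigenvalue of an integer matrix, this yields (2); because $f^*$ has only finitely many eigenvalues it yields (3); and convergence of the telescoping sum yields the existence asserted in (1). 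For (4), Zariski density of $\mathcal{O}_f(P)$ should force $\hat h_{\delta_f}(P)\ne 0$ because the vanishing locus $\{Q:\hat h_{\delta_f}(Q)=0\}$ is a proper $f$-invariant Zariski closed subset of $X(\overline{\mathbb Q})$, and the upper bound then sharpens to the equality $\alpha_f(P)=\delta_f$.

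The main obstacles, and the reason the conjecture remains open in full generality, are threefold. First, algebraic stability is not known to be achievable by a birational model in dimension $\ge 3$, which disrupts the opening reduction. Second, the telescoping definition of $\hat h_\lambda$ converges unconditionally only when the $\lambda$-block is isolated in modulus from other eigenvalues of comparable size; when Jordan blocks of equal modulus interact one must construct vector-valued canonical heights and control cross-terms carefully. Third, and most seriously, the positivity assertion $\hat h_{\delta_f}(P)\ne 0$ for Zariski dense orbits is a Bogomolov-type rigidity statement: one must show that its vanishing locus is Zariski closed, and this is precisely what makes part (4) hard. A realistic partial target along these lines is to prove (1)--(4) under the hypotheses that $f$ is algebraically stable, $\delta_f>1$ is a simple eigenvalue strictly dominating all others in modulus, and the corresponding eigenclass is nef, in which case $\hat h_f$ is constructed by the classical Call--Silverman telescoping and $f$-invariance of its vanishing locus yields (4).
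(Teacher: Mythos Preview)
The statement you are addressing is a \emph{conjecture} (the Kawaguchi--Silverman conjecture), not a theorem of the paper. The paper does not prove it and does not claim to; it is stated as an open problem, with the remark that parts (1)--(3) are known in the special case where $f$ is a morphism (citing \cite{ks2}). The only part of this circle of questions that the paper actually establishes is the inequality $\overline{\alpha}_f(P)\le\delta_f$, which is Theorem~\ref{main}. So there is no ``paper's own proof'' to compare against.

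That said, your outline deserves two substantive comments. First, your route to the upper bound $\overline{\alpha}_f(P)\le\delta_f$ goes through passage to an algebraically stable birational model. As you yourself note, such a model is not known to exist in dimension $\ge 3$, so this is a genuine gap, not a technicality. The paper's proof of Theorem~\ref{main} avoids this entirely: it works directly with an arbitrary dominant rational map, resolves indeterminacy once, and controls the error between $h_H\circ f$ and the linear-algebra prediction using the fact that the discrepancy is a height attached to a numerically trivial divisor, hence bounded by $O(\sqrt{h_H})$. A recursive inequality of the shape $a_n\le Ra_{n-1}+O(\sqrt{a_{n-1}})$ then gives the bound without any stability hypothesis. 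This is a genuinely different and more robust argument than what you propose.

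Second, your sketch for (1)--(3) via canonical heights $\hat h_\lambda$ is essentially the Kawaguchi--Silverman strategy from \cite{ks2}, which works when $f$ is a morphism because then $h_D\circ f = h_{f^*D}+O(1)$ with a \emph{bounded} error, so the telescoping sums converge. For rational maps the functoriality $h_D\circ f \le h_{f^*D}+O(1)$ holds only as an inequality (off the indeterminacy locus), and the missing lower bound is exactly what obstructs the construction of $\hat h_\lambda$ in general. Your identification of the obstacles is accurate, but the proposal as written is a heuristic program rather than a proof, and you should present it as such.
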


For example, this conjecture is proved in the following situations:
\begin{enumerate}
\item $ N^{1}(X)_{\mathbb R}= {\mathbb{R}}$ and $f$ is a morphism \cite{ks3}.
\item $f \colon  {\mathbb{P}}^{N} \dashrightarrow {\mathbb{P}}^{N}$ is a monomial map and $P \in {\mathbb{G}}_{{\rm m}}^{N}( \overline{\mathbb Q})$ \cite{s}.
\item $X$ is a surface and $f$ is a morphism \cite{k, mss}.
\item $X= {\mathbb{P}}^{N}$ and $f$ is a rational map extending a regular affine automorphism \cite{ks3}.
\item $X$ is an abelian variety \cite{ks2, s2}.
\end{enumerate}
When $f$ is a morphism, the first three parts of this conjecture are proved by
Kawaguchi and Silverman in \cite{ks2} (cf.\ Remark \ref{remark on mor case}).
See \cite{ks3, mss, sano, s} for more details about this conjecture.

The main theorem of this paper is the following.

\begin{Thm}\label{main}
Let $f \colon X \dashrightarrow X$ be a dominant rational map defined over $ \overline{\mathbb Q}$.
For any $\epsilon>0$, there exists $C>0$ such that
\[
h_{X}^{+}(f^{n}(P))\leq C(\delta_{f}+\epsilon)^{n}h_{X}^{+}(P)
\]
for all $n\geq0$ and $P \in X_{f}( \overline{\mathbb Q})$.
In particular, for any $P\in X_{f}( \overline{\mathbb Q})$, we have
\[
\overline{\alpha}_{f}(P) \leq \delta_{f}.
\]
\end{Thm}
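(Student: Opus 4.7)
My plan is to reduce the theorem to a one-step height inequality for a sufficiently high iterate $f^m$ of $f$, and then iterate that inequality. The detour through $f^m$ is essential: iterating a one-step bound for $f$ directly would only yield a rate controlled by $\rho(f^*)$, which may strictly exceed $\delta_f$, whereas by the definition of $\delta_f$ (strengthened to the operator-norm version $\lim_n\|(f^n)^*\|^{1/n}=\delta_f$, a standard fact about dynamical degrees on $N^1(X)_{\mathbb R}$) the operator norm $\|(f^m)^*\|$ can be made arbitrarily close to $\delta_f^m$ by taking $m$ large.

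\emph{One-step inequality.} For a dominant rational self-map $g\colon X\dashrightarrow X$, resolve the indeterminacy by $\pi\colon Y\to X$ so that $\tilde g:=g\circ\pi$ is a morphism, and set $g^*H:=\pi_*(\tilde g^*H)\in N^1(X)_{\mathbb R}$. Then $E:=\tilde g^*H-\pi^*(g^*H)$ is $\pi$-exceptional and $\pi$-nef (for a $\pi$-contracted curve $C$, $\tilde g^*H\cdot C=H\cdot \tilde g(C)\ge 0$ by ampleness of $H$), so the negativity lemma yields $-E\ge 0$. Combining this with functoriality of heights under $\pi$ and under the morphism $\tilde g$ gives
\[
h_X\bigl(g(Q)\bigr)\le h_{X,g^*H}(Q)+O_g(1),\qquad Q\in X\setminus I_g.
\]
A standard ample-ordering comparison on $N^1(X)_{\mathbb R}$, using a norm $\|\cdot\|$ adapted to $H$, converts this to
\[
h_X\bigl(g(Q)\bigr)\le c_1\|g^*\|\,h_X^+(Q)+\beta(g),\qquad Q\in X\setminus I_g,
\]
with $c_1$ an absolute constant depending only on $H$, and $\beta(g)$ a constant depending on $g$.

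\emph{Iteration.} Given $\epsilon>0$, choose $m$ large enough that both $\|(f^m)^*\|\le(\delta_f+\epsilon/2)^m$ and $c_1^{1/m}\le(\delta_f+\epsilon)/(\delta_f+\epsilon/2)$ hold. Applying the one-step inequality to $g=f^m$ yields $h_X^+(f^m(Q))\le a\,h_X^+(Q)+b$ with $a\le c_1(\delta_f+\epsilon/2)^m$ and $b:=\beta(f^m)$ a fixed constant. For $n=jm+r$ with $0\le r<m$, iterating this bound $j$ times along $P,f^m(P),\ldots,f^{(j-1)m}(P)$ produces
\[
h_X^+(f^{jm}(P))\le a^j\bigl(1+b/(a-1)\bigr)h_X^+(P),
\]
and the choice of $m$ forces $a^j\le(\delta_f+\epsilon)^{jm}\le(\delta_f+\epsilon)^n$. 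The residue $f^r$ for $r<m$ is absorbed by finitely many trivial one-step bounds for $f,f^2,\ldots,f^{m-1}$, contributing an extra multiplicative constant $C_m$. Bundling everything gives $h_X^+(f^n(P))\le C(\delta_f+\epsilon)^nh_X^+(P)$ for some $C=C(\epsilon,f,H)$.

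\emph{Main obstacle.} The delicate point is that the absolute constant $c_1$ from the one-step bound becomes $c_1^j$ after $j$ iterations, and the entire argument only closes up because $m$ is chosen so that $c_1^{1/m}$ falls below the slack ratio $(\delta_f+\epsilon)/(\delta_f+\epsilon/2)$. Equally essential is the strengthening of the spectral-radius definition $\delta_f=\lim\rho((f^n)^*)^{1/n}$ to the operator-norm statement $\lim\|(f^n)^*\|^{1/n}=\delta_f$; without the latter, the one-step bound cannot be upgraded past rate $\rho(f^*)$, and the strategy collapses.
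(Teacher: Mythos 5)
Your proof is correct, and it repairs the Kawaguchi--Silverman gap in exactly the same way the paper does: you isolate a multiplicative constant ($c_1$, essentially $r^{2}\|\vec c\|$ in the paper's notation) that depends only on the choice of ample $H$ and basis $D_1,\dots,D_r$ of $N^1(X)_{\mathbb R}$, not on the iterate $m$, and then exploit $\|(f^m)^*\|^{1/m}\to\delta_f$. That is precisely the observation the paper flags in Remark~\ref{error}, and your Theorem~\ref{FI}-analogue (uniform bound over a subsequence $nk$ plus Lemma~\ref{uniform bound lem}-type reassembly of the residue classes) matches the paper's two-stage reduction.

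Where you diverge is in how the one-step bound is produced and iterated. The paper keeps the numerically trivial error divisors $E$, $E'_j$, $Z_i$ explicitly as height functions of size $O(\sqrt{h_H})$ and feeds them into a multi-step recursion $a_n\leq a_{n-1}+O(\sqrt{a_{n-1}})$, solved by Lemma~\ref{seq lem0} to give $h_H(f^n(P))\leq Kn^2R^nh_H(P)$ before specializing to $f=f^k$. You instead package the whole thing into a single affine estimate $h_X^+(g(Q))\leq c_1\|g^*\|h_X^+(Q)+\beta(g)$ by an ample-ordering comparison (in effect, $\bigl(\sum_i|a_i|+1\bigr)H-g^*H$ is ample by Kleiman's criterion, so its height is bounded below), and then iterate this affine bound directly. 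That is a genuine simplification for the purpose of Theorem~\ref{main}: you avoid Lemma~\ref{seq lem0} and the $n^2$ bookkeeping entirely, at the cost of a somewhat looser constant. The paper's more delicate $\sqrt{\phantom{h}}$-level tracking is not gratuitous, though: the polynomial factors $n^{2r}$, $n^{r-1}$, $n^2$ in Theorems~\ref{main 1} and~\ref{main 4} (needed for the canonical-height convergence in Propositions~\ref{main 5} and~\ref{main 6}) come exactly from that recursion, and your affine one-step bound would lose them. Two small points worth spelling out if you write this up: you should note $\|g^*\|\geq 1$ for $g$ dominant (so that the additive $+1$ in the ample ordering can be absorbed multiplicatively), and you should remark explicitly that $X_f\subseteq X_{f^m}$, so the affine bound for $f^m$ may indeed be iterated along $P,f^m(P),\dots$ whenever $P\in X_f(\overline{\mathbb Q})$.
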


\begin{Rem}\label{error}
This theorem is stated as Theorem 1 in \cite{ks}, but unfortunately their proof is incorrect.
Precisely, in the proof of Theorem 24 (Theorem 1) in \cite{ks},
the constant $C_{1}$ and therefore $C_{8}$ depends on $m$.
Thus one can not conclude the equality $\lim_{m\to \infty}(C_{8}rm^{r})^{1/ml}=1$
which is a key in the argument of the proof in \cite{ks}.
\end{Rem}

\begin{Rem}
We can also define the arithmetic degrees over the one dimensional function field $ \overline{k(t)}$ of characteristic zero.
In \cite{mss2}, Sano, Shibata and I give another proof of  the inequality $ \alpha_{f}(P)\leq \delta_{f}$ over $ \overline{k(t)}$.
\end{Rem}

If $f$ is a morphism, we have the following slightly stronger inequalities.

\begin{Thm}\label{main 1}
Let $f \colon X \longrightarrow X$ be a surjective morphism.
Let $r=\dim N^{1}(X)_{\mathbb R}$ be the Picard number of $X$.
\begin{enumerate}
\item
When $\delta_{f}=1$, there exists a constant $C>0$ such that
\[
h_{X}^{+}(f^{n}(P))\leq Cn^{2r}h_{X}^{+}(P)
\] 
for all $n\geq1$ and $P\in X( \overline{\mathbb Q})$.
\item
Assume that $\delta_{f}>1$. 
Then there exists a constant $C>0$ such that
\[
h_{X}^{+}(f^{n}(P)) \leq Cn^{r-1}\delta_{f}^{n}h_{X}^{+}(P)
\]
for all $n\geq1$ and $P\in X( \overline{\mathbb Q})$.
\end{enumerate}
\end{Thm}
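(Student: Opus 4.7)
The plan combines functoriality of heights for morphisms with a Jordan-form analysis of $f^* \colon N^1(X)_{\mathbb R} \to N^1(X)_{\mathbb R}$, exploiting that $\rho(f^*) = \delta_f$ when $f$ is a morphism.

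Fix a $\mathbb Z$-basis $D_1,\ldots,D_r$ of $N^1(X)$ and a choice of height $h_i$ for each class $[D_i]$; for $D = \sum n_i D_i$ set $h_D := \sum n_i h_i$. With $\|D\|_1 := \sum |n_i|$, comparing each $h_i$ with the ample height $h_X$ yields a constant $C_0$ such that $|h_D(P)| \leq C_0 \|D\|_1 h_X^+(P)$ for all $D \in N^1(X)$ and all $P \in X(\overline{\mathbb Q})$. Morphism functoriality gives $|h_i \circ f - h_{f^*D_i}| \leq c_i$ for each $i$, and by the \emph{exact} $\mathbb Z$-linearity of the rule $D \mapsto h_D$ this upgrades to a uniform estimate $|h_D \circ f - h_{f^*D}| \leq C_1 \|D\|_1$ for every $D$.

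Telescoping
\[
h_X(f^n(P)) = h_{(f^n)^*H}(P) + \sum_{j=0}^{n-1}\bigl(h_{(f^j)^*H}(f(f^{n-1-j}(P))) - h_{(f^{j+1})^*H}(f^{n-1-j}(P))\bigr)
\]
and applying both bounds gives
\[
|h_X(f^n(P))| \leq C_0 \|(f^n)^*H\|_1\, h_X^+(P) + C_1 \sum_{j=0}^{n-1} \|(f^j)^*H\|_1.
\]
Since $\rho(f^*) = \delta_f$, the Jordan decomposition gives $\|(f^n)^*\|_{\mathrm{op}} \leq K n^{r-1}\delta_f^n$ when $\delta_f > 1$, and $\|(f^n)^*\|_{\mathrm{op}} \leq K n^{r-1}$ when $\delta_f = 1$. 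Substituting these into the previous inequality and summing the resulting geometric (resp.\ polynomial) series delivers the two claimed estimates.

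The main obstacle is the uniformity in the functoriality statement: the naive constant in $|h_D \circ f - h_{f^*D}| = O(1)$ depends on $D$, and iterating it $n$ times would accumulate an uncontrolled error. The device of the exactly $\mathbb Z$-linear normalization of heights reduces that error to the harmless $O(\|D\|_1)$, which keeps the telescoped sum polynomial in $n$; this is precisely why the morphism hypothesis yields sharper estimates than the general rational-map bound of Theorem \ref{main}.
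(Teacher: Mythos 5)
The key step in your argument is the claim that
\[
\bigl|h_D \circ f - h_{f^*D}\bigr| \leq C_1\|D\|_1 \quad\text{uniformly over $X(\overline{\mathbb Q})$},
\]
where $h_{f^*D}$ is computed by expanding $f^*D = \sum_k (A\vec n)_k D_k$ in your fixed basis and applying the linear rule $D \mapsto h_D$. This is false, and the gap is fatal. For a fixed representative divisor $\tilde D_i$ of the class $D_i$, morphism functoriality does give $h_i\circ f = h_{f^*\tilde D_i} + O(1)$, but $f^*\tilde D_i$ and $\sum_k a_{ki}\tilde D_k$ are only \emph{numerically} equivalent, not equal (nor linearly equivalent). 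Heights of two numerically equivalent but non-isomorphic divisors differ by a term that is $O\bigl(\sqrt{h_X^+}\bigr)$ and is in general unbounded. Concretely, $h_i\circ f - \sum_k a_{ki}h_k$ is a Weil height for the numerically-trivial divisor $f^*\tilde D_i - \sum_k a_{ki}\tilde D_k$, and such a height is only controlled by $C\sqrt{h_X^+}$, not by a constant (this is \cite[Theorem B.5.9]{hs}, quoted as inequalities (\ref{bound for e})--(\ref{bound for e_{i}}) in the paper). There is no choice of representatives or normalizations that removes this: $N^1(X)$ is a quotient of $\mathrm{Pic}(X)$ by a positive-dimensional group, and the pullback does not respect any chosen section.

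You can see from the exponent that your bound cannot be repaired without a new idea: with the claimed $O(\|D\|_1)$ uniformity, your telescoping gives $h_X^+(f^n(P)) \leq C\,n^r\,h_X^+(P)$ when $\delta_f = 1$, but the paper's own example (translation by a non-torsion point on an elliptic curve, $r=1$) has $h^+(f^n(0)) \asymp n^2 = n^{2r}$, strictly worse than your $n^r$. The extra factor in the exponent is exactly the cost of the square-root error terms. The paper's proof therefore cannot avoid the recursion: it telescopes as you do but obtains a self-referential inequality of the form $a_n \leq a_{n-1} + C\sqrt{a_{n-1}} + C\sqrt{a_{n-1}+\gamma}$ (or its summed variant $a_n \leq C(a_0 + \sum_{k<n}\sqrt{a_k})$), and then invokes the elementary sequence lemma (Lemma \ref{seq lem0}/\ref{seq lem} in the appendix) to conclude $a_n \leq \widetilde C n^2 a_0$, which is where the extra quadratic factor, and hence the $n^{2r}$, comes from.
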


\begin{Rem}\label{remark on mor case}
In \cite{ks2}, Kawaguchi and Silverman prove a similar inequality under the same assumption of 
Theorem \ref{main 1}.
Moreover, they prove that the arithmetic degree $\alpha_{f}(P)$ exists and
is equal to one of the eigenvalues of the linear map 
$f^{*} \colon  N^{1}(X)_{\mathbb R} \longrightarrow N^{1}(X)_{\mathbb R}$.
Thus for a surjective morphism $f$, the first three parts of Conjecture \ref{ksconj}
and the inequality $\alpha_{f}(P)\leq \delta_{f}$ follows..
\end{Rem}

\begin{Rem}
The exponent $2r$ in Theorem \ref{main 1} (1) is the best possible. 
For example, let $X$ be an elliptic curve with identity element $0\in X$ and $P\in X$ a non-torsion point.
Let $f=T_{P} \colon X \longrightarrow X$  be the translation by $P$.
Then, $\delta_{f}=1$ since $f^{*}={\rm id}$.
Let $h$ be the Neron-Tate height  on $X$.
Then $h^{+}(f^{n}(0))=h^{+}(nP)=\max\{1, n^{2}h(P)\}$.
\end{Rem}

If the Picard number of $X$ is one, we have the following stronger inequalities.

\begin{Thm}\label{main 4}
Let $X$ be a smooth projective variety of Picard number one.
Let $f \colon  X\dashrightarrow X$ be a dominant rational map.
\begin{enumerate}
\item
For a positive integer $k>0$, there exists a constant $C>0$ such that
\[
h_{X}^{+}(f^{n}(P))\leq Cn^{2}\rho((f^{k})^{*})^{n/k}h_{X}^{+}(P)
\]
for all $P\in X_{f}(\overline{ {\mathbb{Q}}})$ and $n\geq 1$.
\item
Let $k>0$ be a positive integer.
Assume that $\rho((f^{k})^{*})>1$.
Then there exists a constant $C>0$ such that
\[
h_{X}^{+}(f^{n}(P))\leq C\rho((f^{k})^{*})^{n/k}h_{X}^{+}(P)
\]
for all $P\in X_{f}(\overline{ {\mathbb{Q}}})$ and $n\geq 0$.
\end{enumerate}
\end{Thm}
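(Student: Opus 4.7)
The strategy is to reduce both parts to iterating a single-step height estimate for $F := f^{k}$. Because the Picard number of $X$ equals one, $(f^{k})^{*}$ acts on $N^{1}(X) \cong \mathbb{Z}$ as multiplication by the nonnegative integer $\mu_{k} := \rho((f^{k})^{*})$, and the same holds for any dominant rational self-map of $X$; in particular $\mu_{k} \geq 1$ always.

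The main ingredient I would establish is the following single-step inequality: for any dominant rational self-map $\phi\colon X \dashrightarrow X$ with $[\phi^{*}H] = \mu\cdot[H]$ in $N^{1}(X)$, there exists a constant $C_{\phi}$ such that
\[
h_{X}^{+}(\phi(P)) \leq \mu\, h_{X}^{+}(P) + C_{\phi}\sqrt{h_{X}^{+}(P)} + C_{\phi} \qquad \text{for all } P \in X\setminus I_{\phi}.
\]
To prove it, take a resolution of indeterminacy $g\colon Y \to X$ with $\tilde{\phi} := \phi\circ g$ a morphism, and consider the $g$-exceptional divisor $E := g^{*}\phi^{*}H - \tilde{\phi}^{*}H$. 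Since $\tilde\phi^{*}H \cdot C = H\cdot \tilde\phi_{*}C \geq 0$ for every $g$-exceptional curve $C$, while $g^{*}\phi^{*}H \cdot C = 0$, the divisor $-E$ is $g$-nef, and the negativity lemma forces $E$ to be effective. For $P \notin I_{\phi}$ the preimage $Q := g^{-1}(P)$ avoids $\mathrm{Supp}(E)$, so $h_{E}(Q) \geq -O(1)$, and the Weil height machine yields $h_{X}(\phi(P)) \leq h_{\phi^{*}H}(P) + O(1)$. Since $\phi^{*}H - \mu H$ lies in $\mathrm{Pic}^{0}(X)$ up to torsion, the standard bound $|h_{L}(P)| \leq C\sqrt{h_{X}^{+}(P)} + C$ for $L \in \mathrm{Pic}^{0}(X)$, applied to $L = \phi^{*}H - \mu H$, completes the derivation.

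To iterate, set $a_{j} := h_{X}^{+}(f^{jk}(P))$ for $P \in X_{f}(\overline{\mathbb{Q}}) \subseteq X_{F}(\overline{\mathbb{Q}})$; the single-step bound applied to $F$ gives $a_{j+1} \leq \mu_{k} a_{j} + C\sqrt{a_{j}} + C$. When $\mu_{k} > 1$, normalizing $b_{j} := a_{j}/\mu_{k}^{j}$ makes the telescoped sqrt-error a convergent geometric series in $\mu_{k}^{-1/2}$, so $b_{m}$ stays bounded by a multiple of $h_{X}^{+}(P)$ and $a_{m} \leq C'\mu_{k}^{m}\,h_{X}^{+}(P)$. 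When $\mu_{k} = 1$, substituting $c_{j} := \sqrt{a_{j}+1}$ converts the recursion into the linear bound $c_{j+1} \leq c_{j} + D$, whence $a_{m} \leq C''m^{2}\,h_{X}^{+}(P)$. For a general exponent $n = mk + b$ with $0 \leq b < k$, one final application of the single-step inequality for the rational map $f^{b}$ passes from $f^{mk}(P)$ to $f^{n}(P)$; the auxiliary sqrt-error is absorbed into the leading term, and because $\mu_{k} \geq 1$ one has $\mu_{k}^{m} \leq \mu_{k}^{n/k}$. This delivers part (2) when $\mu_{k}>1$ and part (1) when $\mu_{k}=1$; part (1) in the remaining case is a trivial weakening of part (2).

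The main technical obstacle is controlling the $\sqrt{h_{X}^{+}}$ correction in the single-step inequality. It arises because $\phi^{*}H$ is only numerically---not linearly---equivalent to $\mu H$, so a genuine N\'eron--Tate-type contribution from $\mathrm{Pic}^{0}(X)$ must be handled. This correction is harmless in part (2), where the geometric series of ratio $\mu_{k}^{-1/2}$ converges absolutely, but it is precisely what forces the polynomial factor $n^{2}$ in part (1) when $\mu_{k}=1$.
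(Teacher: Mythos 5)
Your proposal follows essentially the same strategy as the paper: reduce to a one-step height estimate of the form $h_H(\phi(P)) \leq \mu\,h_H(P) + O\bigl(\sqrt{h_H^+(P)}\bigr)$, obtained from a resolution of indeterminacy, the negativity lemma, and the standard bound on heights associated to numerically trivial divisors, and then iterate along the orbit of $f^{k}$ before patching in the remaining residues $n \bmod k$. Your derivation of the one-step inequality is a streamlined special case of the chain computation (\ref{funddef}) that the paper specializes to Picard rank one in the proof of Theorem \ref{picone} (it is in fact nearly identical to the argument the paper gives for Proposition \ref{main 6}). The one place you should tighten is the iteration for part (2): you write that normalizing $b_{j} := a_{j}/\mu_{k}^{j}$ ``makes the telescoped sqrt-error a convergent geometric series in $\mu_{k}^{-1/2}$,'' but the error at step $j$ is of order $\sqrt{b_{j}}\,\mu_{k}^{-j/2}$, which involves the very quantities $b_{j}$ you are trying to bound, so as written the argument is circular. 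The paper resolves this by proving the polynomial bound of part (1) first (via Lemma \ref{seq lem0}) and feeding it into the error sum; alternatively, the square-root substitution you already use for $\mu_{k}=1$ works equally well when $\mu_{k}>1$, giving $c_{j+1} \leq \sqrt{\mu_{k}}\,c_{j} + t$ and hence the geometric bound directly. Either fix is routine, so this is an imprecision in the sketch rather than a genuine gap; the overall structure and all the key ingredients match the paper's proof.
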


A  dominant rational map $f$ is said to be algebraically stable if 
$(f^{n})^{*}=(f^{*})^{n} \colon  N^{1}(X)_{\mathbb R} \longrightarrow N^{1}(X)_{\mathbb R}$ for all $n>0$.
In this case, $\delta_{f}=\rho(f^{*})$.
As a corollary of Theorem \ref{main},
we get the following.

\begin{Prop}\label{main 5}
Assume that the Picard number of $X$ is one and let $f \colon X \dashrightarrow X$ be an algebraically stable dominant rational map
with $\delta_{f}>1$. Then the limit
\[
\hat{h}_{X,f}(P)=\lim_{n\to \infty}\frac{h_{X}(f^{n}(P))}{\delta_{f}^{n}}
\]
exists for all $P \in X_{f}( \overline{\mathbb Q})$.
\end{Prop}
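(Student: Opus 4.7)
The plan is to deduce convergence of $a_n := h_X(f^n(P))/\delta_f^n$ by combining Theorem~\ref{main} with a one-sided functional inequality coming from the Picard-number-one hypothesis, and then applying a monotone-sequence argument.

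Since $X$ has Picard number one and $f$ is algebraically stable, $f^*$ acts on $N^1(X)_{\mathbb R}=\mathbb R$ as multiplication by its unique eigenvalue, which by the definition of $\delta_f=\rho(f^*)$ equals $\delta_f$. Fix an ample $H$ generating $N^1(X)_{\mathbb R}$; then $f^*H\equiv\delta_f H$ numerically, so the class $f^*H-\delta_f H$ is numerically trivial. By the Weil height machine---using the classical bound that numerically trivial classes yield heights of size $O(\sqrt{h_X^+})$---one gets
\[
|h_{X,f^*H}(Q)-\delta_f h_X(Q)|\leq C_1\sqrt{h_X^+(Q)}+C_2
\]
uniformly in $Q\in X(\overline{\mathbb Q})$. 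A standard one-sided functoriality for rational maps, obtained by taking a resolution $\pi\colon Y\to X$ with $g:=f\circ\pi$ a morphism and analyzing the $\pi$-exceptional divisor $g^*H-\pi^*f^*H$, gives
\[
h_X(f(Q))\leq h_{X,f^*H}(Q)+C_3
\]
uniformly for $Q\in X(\overline{\mathbb Q})\setminus I_f$. Combining, for every $Q\in X_f(\overline{\mathbb Q})$,
\[
h_X(f(Q))\leq \delta_f h_X(Q)+C_1\sqrt{h_X^+(Q)}+C_4.
\]

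Since $\delta_f>1$, choose $\epsilon>0$ with $\delta_f+\epsilon<\delta_f^2$; then Theorem~\ref{main} gives a constant $C_5$ with $h_X^+(f^n(P))\leq C_5(\delta_f+\epsilon)^n h_X^+(P)$ for all $n\geq 0$. Substituting $Q=f^n(P)$ and dividing by $\delta_f^{n+1}$,
\[
a_{n+1}-a_n\leq A\,r^n,\qquad r:=(\delta_f+\epsilon)^{1/2}/\delta_f\in(0,1),
\]
for a suitable constant $A>0$. Since $H$ is ample, the Weil height $h_X$ is globally bounded below by some $-M$, so $a_n\geq -M/\delta_f^n\geq -M$. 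Setting $b_n:=a_n-A(1-r^n)/(1-r)$, one checks $b_{n+1}-b_n=(a_{n+1}-a_n)-A r^n\leq 0$, while $b_n\geq -M-A/(1-r)$. Thus $(b_n)$ is non-increasing and bounded below, hence convergent; therefore $(a_n)$ converges, yielding the sought-for limit $\hat h_{X,f}(P)$.

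The main obstacle is the one-sided inequality $h_X(f(Q))\leq h_{X,f^*H}(Q)+O(1)$ for rational maps. While an analogous equality for morphisms is immediate, the rational-map version requires a careful analysis of the $\pi$-exceptional divisor $g^*H-\pi^*f^*H$ on a resolution: one needs that its height contribution at points $\pi^{-1}(Q)$ lying off the exceptional locus is bounded \emph{above} (though not below) by a uniform constant. Once this is in hand, the monotone-sequence trick bypasses the absence of a two-sided bound by exploiting only the summability of the upper deficits together with the absolute lower bound on ample Weil heights.
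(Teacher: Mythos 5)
Your proof is correct and follows essentially the same route as the paper's argument for Proposition~\ref{main 6} (which immediately yields Proposition~\ref{main 5}): the one-sided inequality $h_H\circ f\leq h_{f^*H}+O(1)$ coming from the negativity lemma on a resolution of indeterminacy, the $O\bigl(\sqrt{h_X^+}\bigr)$ control of the numerically trivial class $f^*H-\delta_f H$, the summability supplied by Theorem~\ref{main}, and the global lower bound on ample heights are exactly the ingredients the paper uses. Your monotone-sequence device (subtracting off the accumulated geometric deficits from $a_n$ to produce a non-increasing bounded-below sequence) is a compact repackaging of the paper's telescoping identity followed by its split of the error sum into an absolutely convergent positive part and a monotone negative part.
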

More generally,
\begin{Prop}\label{main 6}
Let $X$ be a smooth projective variety over $ \overline{\mathbb Q}$.
Let $f \colon  X\dashrightarrow X$ be a dominant rational self-map defined over $ \overline{\mathbb Q}$.
Assume $\delta_{f}>1$ and there exists a nef $ {\mathbb{R}}$-divisor $H$ on $X$ such that $f^{*}H \equiv \delta_{f}H$.
Fix a height function $h_{H}$ associated with $H$.
Then for any $P \in X_{f}( \overline{\mathbb Q})$, 
the limit
\[
\hat{h}_{X,f}(P)=\lim_{n \to \infty} \frac{h_{H}(f^{n}(P))}{\delta_{f}^{n}}
\]
converges or diverges to $-\infty$.
\end{Prop}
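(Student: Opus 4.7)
The plan is to carry out a Tate-style telescoping with one-sided inequalities, which is what produces the dichotomy ``converges or diverges to $-\infty$''; two-sided inequalities would force convergence in $\mathbb R$.

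\emph{Step 1 (one-sided functional inequality).} I would first show: there exist $C_1, C_2 > 0$ such that
\[
h_H(f(Q)) \leq \delta_f \cdot h_H(Q) + C_1 \sqrt{h_X^+(Q)} + C_2, \qquad Q \in X_f(\overline{\mathbb Q}).
\]
Take a resolution $\pi : Y \to X$ of the indeterminacy of $f$, with $Y$ smooth, $\tilde f := f \circ \pi : Y \to X$ a morphism, and $\pi$ an isomorphism over $X \setminus I_f$. Write $\tilde f^* H = \pi^* f^* H + E$ with $E$ a $\pi$-exceptional $\mathbb R$-divisor. For any $\pi$-exceptional curve $C \subset Y$, one has $\pi^* f^* H \cdot C = f^*H \cdot \pi_*C = 0$ while $\tilde f^* H \cdot C \geq 0$ by nefness, so $E$ is $\pi$-nef. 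Zariski's negativity lemma then gives that $-E$ is effective. For $Q \in X \setminus I_f$, the unique lift $Q' \in Y$ lies outside the exceptional locus of $\pi$, hence outside $\mathrm{Supp}(-E)$, so $h_E(Q') \leq O(1)$, and
\[
h_H(f(Q)) = h_{\tilde f^* H}(Q') + O(1) \leq h_{\pi^*f^* H}(Q') + O(1) = h_{f^*H}(Q) + O(1).
\]
Combined with the numerical equivalence $f^* H \equiv \delta_f H$ and the standard bound $h_D = O(\sqrt{h_X^+})$ for numerically trivial $\mathbb R$-divisors $D$, this yields the claimed inequality.

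\emph{Step 2 (telescoping via Theorem \ref{main}).} Setting $a_n := h_H(f^n(P))/\delta_f^n$ and applying Step 1 at $Q = f^n(P)$,
\[
a_{n+1} - a_n \leq \frac{C_1 \sqrt{h_X^+(f^n(P))} + C_2}{\delta_f^{n+1}}.
\]
By Theorem \ref{main}, for any $\epsilon > 0$ there is $C > 0$ with $h_X^+(f^n(P)) \leq C (\delta_f + \epsilon)^n h_X^+(P)$. Since $\delta_f > 1$, I may choose $\epsilon$ small enough that $\sqrt{\delta_f + \epsilon} < \delta_f$; then the right-hand side is bounded by a summable sequence $c_n$.

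\emph{Step 3 (conclusion).} The sequence $b_n := a_n - \sum_{k < n} c_k$ satisfies $b_{n+1} - b_n = (a_{n+1} - a_n) - c_n \leq 0$, so it is non-increasing and hence converges in $\mathbb R \cup \{-\infty\}$. Since $\sum_k c_k$ converges in $\mathbb R$, the same holds for $a_n$.

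The main technical obstacle is Step 1, specifically the application of Zariski negativity to $E$: this uses crucially that $H$ is nef (and is the reason only an upper bound on $h_H \circ f$ is available), and it also requires verifying that the chosen lift $Q'$ avoids the support of the exceptional discrepancy, which is why taking $\pi$ to be an isomorphism over $X \setminus I_f$ is essential.
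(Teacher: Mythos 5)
Your proof is correct and takes essentially the same approach as the paper: both establish the one-sided inequality $h_H\circ f \leq \delta_f h_H + O\bigl(\sqrt{h_X^+}\bigr)$ on $X\setminus I_f$ via the negativity lemma applied on a resolution of indeterminacy (using nefness of $H$ exactly as you describe), then telescope and invoke Theorem~\ref{main} with $\sqrt{\delta_f+\epsilon}<\delta_f$ to make the error terms summable. Your final step with the non-increasing auxiliary sequence $b_n$ is just a cleaner packaging of the paper's argument, which splits the series into its nonnegative part (convergent by the geometric bound) and its nonpositive part (monotone).
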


\begin{que}
Are there any examples that the limits diverge to $-\infty$ ?
\end{que}

The function $\hat{h}_{X,f}$ is the function which is called the canonical height function in \cite{s}.
The canonical height functions of dynamical systems of self-morphisms are systematically studied in \cite{callsilv}.
On the other hand, little is known about the canonical heights of rational maps.
There are several recent studies on them.
In \cite[Theorem D]{jr}, it is proved that any birational self-maps of surfaces with dynamical degree greater than one
admit canonical heights up to birational conjugate.
In \cite{k2}, the canonical heights of regular affine automorphisms are studied in detail.

We prove Theorem \ref{main 1} in \S \ref{mor case},
Theorem \ref{main} in \S \ref{sec:FI},
Theorem \ref{main 4} and Proposition \ref{main 5}, \ref{main 6} in \S \ref{section:picard rank one case}.
In the proof of Theorem \ref{main 4}, we use the computation in
the proof of Theorem \ref{FI} in \S \ref{sec:FI}.

In this paper, we give a method to estimate $ h_{H}(f^{n}(P))$ in terms of the behavior 
of $f$ on the group $ N^{1}(X)_{\mathbb R}$
by controlling error terms arising from divisors numerically equivalent to zero.
We give an expression of error terms as a linear combinations of fixed height functions whose coefficients
can be controlled easily.

\begin{Rem}\label{remark on heights}
Let $D$ be an $ {\mathbb{R}}$-divisor on $X$.
Then $D$ determines a unique (logarithmic) Weil height function $h_{D}$ up to bounded functions as follows.
When $D$ is a very ample integral divisor, $h_{D}$ is the composite of the embedding by $|D|$ and the height on the projective space.
For general $D$, we write 

\begin{align}
\label{very ample sum}
D=\sum_{i=1}^{m}a_{i}H_{i} 
\end{align}
where $a_{i}$ are real numbers and $H_{i}$ are very ample divisors.
Then we define
\[
h_{D}=\sum_{i=1}^{m}a_{i}h_{H_{i}}.
\]
The function $h_{D}$ does not depend on the choice of the representation (\ref{very ample sum})
up to bounded function (see \cite{bg,hs,l} for the detail).
We call any representative of the class $h_{D} \mod (\text{bounded functions})$ a height function
associated with $D$.
We call a height function associated with an ample divisor an ample height function.

In the above definition, theorems and proposition, we fix a height function $h_{X}$.
Actually, for the definition of arithmetic degree, we can replace $h_{X}$ by any ample height functions.
Also, the above theorems and proposition are valid for all ample height functions $h_{X}$.
Indeed, note that for any ample height functions $h,h'$, there exists a positive number $c$ such that
\begin{align*}
ch^{+}\geq {h'}^{+},\ c{h'}^{+}\geq h^{+}
\end{align*}
on $X( \overline{\mathbb Q})$.
Thus, for the proof of the above theorems,
it is enough to prove them for a particular ample height function.
\end{Rem}

\begin{Rem}[Other ground fields]
All of the results and arguments in this paper remain valid without change for other ground fields $\overline{K}$
of characteristic $0$ where $K$ is a field with a set of non-trivial absolute values satisfying the product formula.
The main theorems (Theorem \ref{main}, \ref{main 1}) also hold over a field of positive characteristic, see Appendix \ref{pos chara}.
\end{Rem}

\begin{flushleft}
{\bf Notation.}
\end{flushleft}

\begin{notation}
\item[$||\ ||$] For a real vector $v\in {\mathbb{R}}^{n}$ or  a real matrix $M\in M_{n\times m}( {\mathbb{R}})$, $||v||$ and $||M||$ are
the maximum among the absolute values of the coordinates.
\item[$\equiv$] For two divisors $D_{1}, D_{2}$ on a projective variety, $D_{1}\equiv D_{2}$ means $D_{1}$ and $D_{2}$ are numerically equivalent.
\item[$\left<\  ,\ \right>$] For two column vectors $v=(v_{1}, \dots, v_{n}), w=(w_{1},\dots, w_{n})$ of the same size,  we write
$\left<v,w\right>=\sum v_{i}w_{i}$. We use this notation whenever the multiplication $v_{i}w_{i}$ is defined
(e.g. $v_{i}$ are real numbers and $w_{i}$ are $ {\mathbb{R}}$-divisors or real valued functions).
Similarly, for a real matrix $M$ and a vector $w$ entries in divisors or real valued functions, $Mw$ is defined in the obvious manner.
\item[${\bf h}\circ f$]  For a vector valued function ${\bf h}=(h_{1},\dots, h_{n})$ on a set $X$ and a map $f$ to $X$, we write
${\bf h}\circ f=(h_{1}\circ f,\dots, h_{n}\circ f)$.
 \end{notation}

\section{Endomorphism case}\label{mor case}

We first treat the case where $f$ is a morphism.
The purpose of this section is to prove the following theorem.

\begin{Thm}[Theorem \ref{main 1}]\label{bound for mor case}
Let X be a projective variety over ${\overline {\mathbb{Q}}}$ and
$f \colon X \longrightarrow X$ be a surjective morphism defined over $ \overline{\mathbb Q}$.
Let $\delta_{f}$ be the spectral radius of $f^{*} \colon N^{1}(X)_{ {\mathbb{R}}} \longrightarrow N^{1}(X)_{ {\mathbb{R}}}$.
(Actually, $\delta_{f}$ is equal to the dynamical degree of $f$ which is defined by taking a resolution of singularities.)
Let $r=\dim N^{1}(X)_{\mathbb R}$ be the Picard number of $X$.
Fix an ample height function $h_{X}$ on $X$.
\begin{enumerate}
\item
When $\delta_{f}=1$, there exists a constant $C>0$ such that
\[
h_{X}^{+}(f^{n}(P))\leq Cn^{2r}h_{X}^{+}(P)
\] 
for all $n\geq1$ and $P\in X( \overline{\mathbb Q})$.

\item
Assume that $\delta_{f}>1$. 
Then there exists a constant $C>0$ such that
\[
h_{X}^{+}(f^{n}(P)) \leq Cn^{r-1}\delta_{f}^{n}h_{X}^{+}(P)
\]
for all $n\geq1$ and $P\in X( \overline{\mathbb Q})$.
\end{enumerate}
\end{Thm}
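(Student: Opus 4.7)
The plan is to pick a basis of $N^{1}(X)_{\mathbb{R}}$ consisting of very ample classes, derive a one-step vector-valued height inequality, and iterate using the Jordan form of $f^{*}$.

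First I would choose a basis $H_{1},\dots,H_{r}$ of $N^{1}(X)_{\mathbb{R}}$ consisting of very ample divisor classes, which is possible because the ample cone is open, and set ${\bf h}=(h_{H_{1}},\dots,h_{H_{r}})^{T}$, normalizing so that each $h_{H_{i}}\geq 1$. Putting $A=\sum_{i}H_{i}$, the height $h_{X}^{+}$ is comparable to $\|{\bf h}\|_{\infty}$, so it suffices to control ${\bf h}$. Let $M=(M_{ij})\in M_{r}(\mathbb{Z})$ be the matrix of $f^{*}$ in this basis, so $f^{*}H_{j}\equiv \sum_{i}M_{ij}H_{i}$. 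Functoriality of the Weil height machine for the morphism $f$ gives $h_{H_{i}}\circ f=h_{f^{*}H_{i}}+O(1)$; writing $f^{*}H_{i}=\sum_{j}M_{ji}H_{j}+E_{i}$ with $E_{i}$ numerically trivial and using $|h_{E_{i}}|\leq C\sqrt{h_{A}^{+}}+C$ (valid because numerically trivial classes lie in $\operatorname{Pic}^{0}(X)$ and heights of antisymmetric divisors on an abelian variety grow like $\sqrt{h}$), I obtain the vector inequality
\[
{\bf h}\circ f \;\leq\; M^{T}{\bf h} + \bigl(C\sqrt{h_{A}^{+}}+C\bigr){\bf 1}.
\]

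Next I would iterate this to get
\[
{\bf h}\circ f^{n} \;\leq\; (M^{T})^{n}{\bf h} + \sum_{k=0}^{n-1}(M^{T})^{k}\bigl(C\sqrt{h_{A}^{+}\circ f^{n-1-k}}+C\bigr){\bf 1},
\]
and control $\|M^{n}\|$ via the Jordan normal form of $M$: since the largest Jordan block has size at most $r$, one has $\|M^{n}\|\leq c\, n^{r-1}$ when $\delta_{f}=1$ and $\|M^{n}\|\leq c\, n^{r-1}\delta_{f}^{n}$ when $\delta_{f}>1$. Setting $u_{n}=h_{A}^{+}(f^{n}(P))$ and taking the sup-norm produces the scalar recursion
\[
u_{n}\;\leq\; c\|M^{n}\|\,u_{0}\;+\;C'\sum_{k=0}^{n-1}\|M^{k}\|\bigl(\sqrt{u_{n-1-k}}+1\bigr).
\]
I would then close the estimate by induction on $n$, proving $u_{n}\leq B(1+u_{0})\varphi(n)$ for a sufficiently large constant $B>0$ with $\varphi(n)=n^{2r}$ in case (1) and $\varphi(n)=n^{r-1}\delta_{f}^{n}$ in case (2). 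The decisive combinatorial bounds are $\sum_{k=0}^{n-1}k^{r-1}(n-k)^{r}=O(n^{2r})$ in case (1), and in case (2) $\sum_{k=0}^{n-1}k^{r-1}\delta_{f}^{k}(n-k)^{(r-1)/2}\delta_{f}^{(n-k)/2}=O(n^{r-1}\delta_{f}^{n})$, which holds because $\delta_{f}>1$ makes the sum essentially geometric and concentrated near $k=n$.

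The main obstacle is the $O(\sqrt{h_{A}^{+}})$ error forced by passing from linear to numerical equivalence: it is precisely this $\sqrt{h}$ term that drives the exponent $2r$ in case (1), since an ansatz of degree less than $2r$ cannot be stable under the convolution $\sum k^{r-1}\sqrt{u_{n-k}}$. In case (2) the exponential factor $\delta_{f}^{n}$ dominates the square-root error, so the naive Jordan growth rate $n^{r-1}\delta_{f}^{n}$ is preserved intact.
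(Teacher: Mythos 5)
Your proposal is correct and is essentially the same argument as the paper's: pick a basis of (very) ample classes, use $h_{D}\circ f = h_{f^*D}+O(1)$ and the $O(\sqrt{h})$ bound for heights of numerically trivial divisors to set up a telescoping recursion, bound $\|M^{k}\|$ by $k^{r-1}\delta_{f}^{k}$ via Jordan form, and close the estimate. Two places where the paper's write-up differs in a way worth noting. First, your displayed vector iteration $\mathbf{h}\circ f^{n}\leq (M^{T})^{n}\mathbf{h}+\sum_{k}(M^{T})^{k}(\cdots)\mathbf{1}$ is not literally valid: $M^{T}$ can have negative entries, so multiplying a componentwise inequality by $(M^{T})^{k}$ does not preserve it. The fix is to keep the equality $\mathbf{h}\circ f^{n}-(M^{T})^{n}\mathbf{h}=\sum_{k=0}^{n-1}(M^{T})^{k}\bigl[(\mathbf{h}\circ f-M^{T}\mathbf{h})\circ f^{n-1-k}\bigr]$ and only then take norms; your subsequent scalar recursion with $\|M^{k}\|$ is exactly what that yields, so the slip is in the display rather than the argument, but you should state it via absolute values. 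The paper sidesteps this entirely by telescoping with equalities (in terms of $h_{E}$, $\mathbf{h}_{\vec E}$) and passing to an inequality only at the end. Second, where you close the recursion by a single direct induction on the ansatz $u_{n}\leq B(1+u_{0})\varphi(n)$ with $\varphi(n)=n^{2r}$ or $n^{r-1}\delta_{f}^{n}$, verifying convolution bounds such as $\sum k^{r-1}(n-k)^{r}=O(n^{2r})$, the paper instead normalizes the sequence $a_{n}=h_{H}(f^{n}(P))/(n^{r-1}\delta_{f}^{n})$ and invokes a clean auxiliary lemma ($a_{n}\leq C(a_{0}+\sum_{j<n}\sqrt{a_{j}})\Rightarrow a_{n}\leq \widetilde{C}n^{2}a_{0}$), and for $\delta_{f}>1$ it then bootstraps once more, feeding the resulting $n^{r+1}\delta_{f}^{n}$ bound back into the recursion and using convergence of $\sum k^{1+(r-1)/2}/\delta_{f}^{1+k/2}$ to strip the extra $n^{2}$. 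Both routes give the stated exponents, but the paper's normalize-then-apply-lemma scheme is more modular and makes the extra $n^{2}$ and its removal transparent, whereas your direct induction needs the constant $B$ to absorb all the convolution constants in one pass and requires a careful base case; one can make either rigorous.
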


\begin{proof}

Let $D_{1},\dots ,D_{r}$ be $ {\mathbb{R}}$-divisors
which form a basis for $ N^{1}(X)_{ {\mathbb{R}}}$.
Let $H$ be an ample divisor on $X$ such that $H +D_{i},\ H-D_{i}\ (i=1,\dots, r)$ are ample.
For $ {\mathbb{R}}$-divisors $\alpha, \beta$, $\alpha \equiv \beta$
means $\alpha$ and $\beta$ are numerically equivalent.
Let $f^{*}D_{i}\equiv \sum_{k=1}^{r}a_{ki}D_{k}$, and $A=(a_{ki})_{k,i}$.
We can write $H \equiv \sum_{i=1}^{r}c_{i}D_{i}$.
Then
\[
f^{*}H\equiv \sum_{j=1}^{r}\sum_{k=1}^{r}c_{j}a_{kj}D_{k}=
\left< A
\left( 
\begin{array}{c}
c_{1}\\
c_{2}\\
\vdots \\
c_{r}
\end{array}
\right),
\left(
\begin{array}{c}
D_{1}\\
D_{2}\\
\vdots \\
D_{r}
\end{array}
\right)
\right>
=\left<A\vec{c},\vec{D}\right>.
\]
Let
\begin{align}
\label{eq:1}
&E=f^{*}H-\left<A\vec{c},\vec{D}\right>\\
\label{eq:2}
&E_{i}=f^{*}D_{i}-\sum_{k=1}^{r}a_{ki}D_{k}.
\end{align}
Then
\[
\vec{E}=
\left(
\begin{array}{c}
E_{1}\\
E_{2}\\
\vdots \\
E_{r}
\end{array}
\right)
=f^{*} \vec{D}-{}^{ \rm t}A \vec{D}.
\]
Note that $E,E_{i}$ are numerically zero.

{\bf The choice of Height functions}.

First, we take and fix height functions $h_{D_{1}},\dots h_{D_{r}}$ associated with $D_{1},\dots , D_{r}$.
Next, we take and fix a height function $h_{H}$ associated with $H$ so that
$h_{H}\geq 1,\ h_{H}\geq |h_{D_{i}}|\ (i=1,\dots r)$.
Then $h_{D_{i}}\circ f,\ h_{H}\circ f$ are height functions associated with $f^{*}D_{i}$ and $f^{*}H$.
We write
\[
{\bf h}_{\vec{D}}=
\left(
\begin{array}{c}
h_{D_{1}}\\
h_{D_{2}}\\
\vdots \\
h_{D_{r}}
\end{array}
\right).
\]
We define
\begin{align}
\label{morhtE}
&h_{E}=h_{H}\circ f-
\left<A \vec{c}, {\bf h}_{\vec{D}}\right>\\
\label{morhtvecE}
&{\bf h}_{\vec{E}}=
\left(
\begin{array}{c}
h_{E_{1}}\\
h_{E_{2}}\\
\vdots \\
h_{E_{r}}
\end{array}
\right)=
{\bf h}_{\vec{D}}\circ f -{}^{\rm t}A {\bf h}_{\vec{D}}\ .
\end{align}
Then, by (\ref{eq:1})(\ref{eq:2}), $h_{E}$ and $h_{E_{i}}$ are height functions
associated with $E$ and $E_{i}$.
Now, since $E,E_{i}$ are numerically zero, there exists a constant $C>0$ such that for all
$Q\in X({\overline {\mathbb{Q}}})$
\begin{align}
\label{bound for e}
&|h_{E}(Q)|\leq C\sqrt[]{h_{H}(Q)}\\
\label{bound for e_{i}}
&|h_{E_{i}}(Q)|\leq C\sqrt[]{h_{H}(Q)}\ \ i=1,\dots ,r.
\end{align}
See for example \cite[Theorem B.5.9]{hs}  and Proposition \ref{root bound on proj var}.

Let us begin the estimation of $ h_{H}(f^{n}(P))$.
Let $P\in X({\overline {\mathbb{Q}}})$ be an arbitrary point.
Then we have
\[
h_{H}(f(P))=h_{E}(P)+\left<A \vec{c}, {\bf h}_{\vec{D}}\right>(P).
\]
For $n\geq2$, we have
\begin{align*}
h_{H}(f^{n}(P))=&(h_{H}\circ f)(f^{n-1}(P))-\left<A\vec{c}, {\bf h}_{\vec{D}}\right>(f^{n-1}(P))\\
&+\left<A\vec{c}, {\bf h}_{\vec{D}}\circ f \right>(f^{n-2}(P))-\left<A^{2}\vec{c}, {\bf h}_{\vec{D}}\right>(f^{n-2}(P))\\
&+\cdots \\
&+\left<A^{n-2}\vec{c}, {\bf h}_{\vec{D}}\circ f\right>(f(P))-\left<A^{n-1}\vec{c}, {\bf h}_{\vec{D}}\right>(f(P))\\
&+\left<A^{n-1}\vec{c}, {\bf h}_{\vec{D}}\circ f\right>(P)\\
=&h_{E}(f^{n-1}(P))\\
&+\left<A \vec{c}, {}^{\rm t}A {\bf h}_{\vec{D}}+ {\bf h}_{\vec{E}}\right>(f^{n-2}(P))-\left<A^{2}\vec{c}, {\bf h}_{\vec{D}}\right>(f^{n-2}(P))\\
&+\cdots \\
&+\left<A^{n-2} \vec{c}, {}^{\rm t}A {\bf h}_{\vec{D}}+ {\bf h}_{\vec{E}}\right>(f(P))-\left<A^{n-1}\vec{c}, {\bf h}_{\vec{D}}\right>(f(P))\\
&+\left<A^{n-1}\vec{c}, {}^{\rm t}A {\bf h}_{\vec{D}}+ {\bf h}_{\vec{E}}\right>(P) & \qquad \text{by (\ref{morhtE})(\ref{morhtvecE})}\\
=&h_{E}(f^{n-1}(P))\\
&+\left<A \vec{c}, {\bf h}_{\vec{E}}\right>(f^{n-2}(P))\\
&+\cdots \\
&+\left<A^{n-2} \vec{c}, {\bf h}_{\vec{E}}\right>(f(P))\\
&+\left<A^{n-1} \vec{c}, {\bf h}_{\vec{E}}\right>(P)+\left<A^{n} \vec{c}, {\bf h}_{\vec{D}}\right>(P).
\end{align*}
By (\ref{bound for e})(\ref{bound for e_{i}})
\[
|\left<A^{m} \vec{c}, {\bf h}_{\vec{E}}\right>(Q)|\leq r^{2}\| \vec{c}\|\|A^{m}\|C\sqrt[]{h_{H}(Q)}\ \ \text{for}\ Q\in X(\overline {\mathbb{Q}}).
\]
Also, by the choice of $h_{H}$ and $h_{D_{i}}$, we have
\[
|\left<A^{n} \vec{c}, {\bf h}_{\vec{D}}\right>(P)|\leq r^{2}\| \vec{c}\|\|A^{n}\|h_{H}(P).
\]
Thus
\begin{align}
\label{root ineq}
h_{H}(f^{n}(P))\leq &C\left(\sqrt[]{h_{H}(f^{n-1}(P))}+r^{2}\| \vec{c}\|\|A\|\sqrt[]{h_{H}(f^{n-2}(P))}+\cdots \right. \\
&\left.+r^{2}\| \vec{c}\|\|A^{n-2}\|\sqrt[]{h_{H}(f(P))}+r^{2}\| \vec{c}\|\|A^{n-1}\|\sqrt[]{h_{H}(P)}\right)+r^{2}\| \vec{c}\|\|A^{n}\|h_{H}(P). \notag
\end{align}

For simplicity, we write $\delta=\delta_{f}$.
Let $\rho(f^{*})$ be the spectral radius of the linear map
$f^{*} \colon  N^{1}(X)_{\mathbb R} \longrightarrow N^{1}(X)_{\mathbb R}$.
Let $\rho(A)$ be the spectral radius of the matrix $A$.
By definition, we have $\delta=\rho(f^{*})=\rho(A)=\lim_{n\to \infty}\|A^{n}\|^{1/n}$.
Note that
\[
\frac{r^{2}\| \vec{c}\|\| A^{k}\|}{k^{r-1}\rho(A)^{k}}=\frac{r^{2}\| \vec{c}\|\| A^{k}\|}{k^{r-1}\delta^{k}}
\]
is bounded with respect to $k>0$.

Let $C_{1}=\sup_{k>0}\left\{ r^{2}\| \vec{c}\|\| A^{k}\|\big/k^{r-1}\delta^{k}\right\}$.
Set $C_{2}=\max\left\{1,C_{1}, CC_{1}, C\right\}$. 
Then dividing inequality (\ref{root ineq}) by $n^{r-1}\delta^{n}$, we get
\begin{align}
\label{root ineq 2}
&\frac{ h_{H}(f^{n}(P))}{n^{r-1}\delta^{n}} \\
&\leq C\left( \frac{r^{2}\| \vec{c}\|\|A^{n-1}\|}{n^{r-1}\delta^{n}} \sqrt[]{h_{H}(P)}+\right.\notag\\
&\left.\ \ \ \sum_{k=1}^{n-2} \frac{r^{2}\| \vec{c}\|\|A^{n-1-k}\|}{(n-1-k)^{r}\delta^{n-1-k}} \sqrt[]{\frac{ h_{H}(f^{k}(P))}{k^{r-1}\delta^{k}}}
\frac{(n-1-k)^{r-1}k^{(r-1)/2}}{n^{r-1}\delta^{1+k/2}} \right.\notag\\
&\left.\ \ \ +  \sqrt[]{\frac{ h_{H}(f^{n-1}(P))}{(n-1)^{r-1}\delta^{n-1}}} \frac{(n-1)^{(r-1)/2}}{n^{r-1}\delta^{1+(n-1)/2}} \right)
+\frac{r^{2}\| \vec{c}\|\|A^{n}\|}{n^{r-1}\delta^{n}}h_{H}(P)\notag\\
&\leq  C_{2}\left( \sqrt[]{h_{H}(P)} +\sum_{k=1}^{n-2} \sqrt[]{\frac{ h_{H}(f^{k}(P))}{k^{r-1}\delta^{k}}}
\frac{(n-1-k)^{r-1}k^{(r-1)/2}}{n^{r-1}\delta^{1+k/2}}\right.\notag\\
&\left.\ \ \ +\sqrt[]{\frac{ h_{H}(f^{n-1}(P))}{(n-1)^{r-1}\delta^{n-1}}} \frac{(n-1)^{(r-1)/2}}{n^{r-1}\delta^{1+(n-1)/2}}+h_{H}(P)\right).\notag
\end{align}

First we assume that $\delta>1$.
Then $k^{(r-1)/2}\big/\delta^{1+k/2}$ is bounded with respect to $k$.
Thus, there exists a constant $C_{3}>0$ which is independent of $n, P$ so that
\[
\frac{ h_{H}(f^{n}(P))}{n^{r-1}\delta^{n}} \leq 
C_{3}\left( \sqrt[]{h_{H}(P)}+ \sum_{k=1}^{n-1}\sqrt[]{\frac{ h_{H}(f^{k}(P))}{k^{r-1}\delta^{k}}}+h_{H}(P)\right).
\]
Applying Lemma \ref{seq lem} to the sequence
$a_{0}=h_{H}(P), a_{n}=h_{H}(f^{n}(P))\big/n^{r}\delta^{n}\ (n\geq1)$, there exists a constant $C_{4}>0$ independent of $n, P$ such that
\[
\frac{ h_{H}(f^{n}(P))}{n^{r-1}\delta^{n}} \leq C_{4}n^{2}h_{H}(P)
\]
for all $n\geq1$.
Again from (\ref{root ineq 2}),
\[
\frac{ h_{H}(f^{n}(P))}{n^{r-1}\delta^{n}}\leq
C_{2}\left( \sqrt[]{h_{H}(P)}+\sum_{k=1}^{n-1} \sqrt[]{C_{4}h_{H}(P)} \frac{k^{1+(r-1)/2}}{\delta^{1+k/2}}+h_{H}(P)\right).
\]
Since $\sum_{k=1}^{\infty}k^{1+(r-1)/2}\big/\delta^{1+k/2}$ is convergent, there exists a constant
$C_{5}>0$ independent of $n, P$ such that
\[
\frac{ h_{H}(f^{n}(P))}{n^{r-1}\delta^{n}}\leq C_{5}h_{H}(P).
\]
Thus $h_{H}(f^{n}(P))\leq C_{5}n^{r-1}\delta^{n}h_{H}(P)$.
Now, since $h_{H}$ and $h_{X}$ are ample height functions and we take $h_{H}\geq 1$,
there exists an integer $m >0$ such that
\[
mh_{H}\geq h_{X}^{+},\ mh_{X}^{+}\geq h_{H}.
\]
Thus
\[
h_{X}^{+}(f^{n}(P))\leq m h_{H}(f^{n}(P)) \leq mC_{5}n^{r-1}\delta^{n}h_{H}(P)
\leq m^{2}C_{5}n^{r-1}\delta^{n}h_{X}^{+}(P).
\]
This completes the proof of Theorem \ref{bound for mor case}(2).

Now assume that $\delta=1$.
Dividing both sides of (\ref{root ineq 2}) by $n^{r-1}$, we get
\begin{align*}
\frac{ h_{H}(f^{n}(P))}{n^{2r-2}}
\leq  C_{2}&\left( \frac{\sqrt[]{h_{H}(P)}}{n^{r-1}} +\sum_{k=1}^{n-2} \sqrt[]{\frac{ h_{H}(f^{k}(P))}{k^{2r-2}}}
\frac{(n-1-k)^{r-1}k^{r-1}}{n^{2r-2}}\right.\\
&\ \ \left.+\sqrt[]{\frac{ h_{H}(f^{n-1}(P))}{(n-1)^{2r-2}}} \frac{(n-1)^{r-1}}{n^{2r-2}}+\frac{h_{H}(P)}{n^{r-1}}\right)\\
\leq C_{2}&\left( \sqrt[]{h_{H}(P)} +\sum_{k=1}^{n-1} \sqrt[]{\frac{ h_{H}(f^{k}(P))}{k^{2r-2}}}+h_{H}(P)\right).
\end{align*}
By Lemma \ref{seq lem}, there exists a constant $C_{6}>0$ independent of $n, P$ such that
\[
h_{H}(f^{n}(P)) \leq C_{6}n^{2r}h_{H}(P) \ \ \ \text{for all $n\geq1$}.
\]
By the same argument at the end of the proof of (2), this proves Theorem \ref{bound for mor case}(1).
\end{proof}

\section{Rational self-map case}\label{sec:FI}

Now we prove the main theorem of this paper.

\begin{Thm}[Theorem \ref{main}]\label{MI}
Let $X$ be a smooth projective variety over $ \overline{\mathbb Q}$ and $f \colon X \dashrightarrow X$
be a dominant rational map defined over $ \overline{\mathbb Q}$.
Let $\delta_{f}$ be the first dynamical degree of $f$.
Fix an ample height function $h_{X}$ on $X$.
Then, for any $\epsilon>0$, there exists $C>0$ such that
\[
h_{X}^{+}(f^{n}(P))\leq C(\delta_{f}+\epsilon)^{n}h_{X}^{+}(P)
\]
for all $n\geq0$ and $P \in X_{f}( \overline{\mathbb Q})$.
In particular, for any $P\in X_{f}( \overline{\mathbb Q})$, we have
\[
\overline{\alpha}_{f}(P) \leq \delta_{f}.
\]
\end{Thm}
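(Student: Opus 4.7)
The approach I would take is to adapt the telescoping/matrix-iteration argument from the proof of Theorem~\ref{bound for mor case} to the rational case. The essential difficulty is that $(f^{n})^{*}$ on $N^{1}(X)_{\mathbb{R}}$ is not $(f^{*})^{n}$ in general, so one cannot iterate a single pullback matrix. The idea is to fix, at the outset, a sufficiently high iterate $\phi := f^{N}$ and work only with iterates of $\phi$. Fix $\epsilon > 0$; since $\delta_{f} = \lim_{n\to\infty}\rho((f^{n})^{*})^{1/n}$, choose $N$ with $\rho((f^{N})^{*}) < (\delta_{f} + \epsilon/2)^{N}$, and set $\phi := f^{N}$, $\delta := \rho(\phi^{*})$. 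For each fixed $r<N$, one has $h_{X}^{+}\circ f^{r}\le B_{r}h_{X}^{+}$ on $X_{f}(\overline{\mathbb{Q}})$ for some constant $B_{r}$, so it suffices to prove $h_{X}^{+}(\phi^{m}(P))\le C_{0}(\delta+\delta_{0})^{m}h_{X}^{+}(P)$ for some small $\delta_{0}>0$ with $(\delta+\delta_{0})^{1/N}\le \delta_{f}+\epsilon$, all $m\ge 0$, and all $P\in X_{\phi}(\overline{\mathbb{Q}})$. Crucially, $N$, the resolution $\pi:W\to X$ of $\phi$, and every constant depending on $\pi$ are fixed \emph{before} iterating, so nothing in the rest of the argument depends on $m$.

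With this reduction, I would repeat the setup of Theorem~\ref{bound for mor case}: choose a basis $D_{1},\dots,D_{r}$ of $N^{1}(X)_{\mathbb{R}}$, an ample $H$ on $X$ with $H\pm D_{i}$ ample, write $H\equiv\langle\vec c,\vec D\rangle$, and let $A$ be the matrix of $\phi^{*}$ in this basis (so $\rho(A)=\delta$). Fix heights $h_{D_{i}}$, $h_{H}$ on $X$ as in the morphism case, and set $g:=\phi\circ\pi:W\to X$. On $W$, also fix an ample height together with heights for the $\pi$-exceptional $\mathbb{R}$-divisors $F_{H}:=g^{*}H-\pi^{*}\phi^{*}H$ and $F_{D_{i}}:=g^{*}D_{i}-\pi^{*}\phi^{*}D_{i}$. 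For $P\in X_{\phi}(\overline{\mathbb{Q}})$ and any chosen lift $\tilde P\in W$ with $\pi(\tilde P)=P$, functoriality of heights under the morphism $g$ together with the decomposition $\phi^{*}H=\langle A\vec c,\vec D\rangle+E$ (where $E\equiv 0$ on $X$) gives
\[
h_{H}(\phi(P))=\langle A\vec c,\mathbf{h}_{\vec D}\rangle(P)+h_{E}(P)+h_{F_{H}}(\tilde P)+O(1),
\]
and similarly for each $D_{i}$. The term $h_{E}$ on $X$ is $O(\sqrt{h_{H}})$ by the standard estimate for numerically trivial divisors (Proposition~\ref{root bound on proj var}).

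Telescoping $h_{H}(\phi^{m}(P))$ exactly as in the morphism proof produces a main term $\langle A^{m}\vec c,\mathbf{h}_{\vec D}\rangle(P)$, numerically-trivial error sums $\sum_{k}\langle A^{k}\vec c,\mathbf{h}_{\vec E}\rangle(\phi^{m-1-k}(P))$, and extra accumulated terms $\sum_{k}\langle A^{k}\vec c,\mathbf{h}_{\vec F}\rangle(\tilde P_{m-1-k})$ arising from the resolution, where $\tilde P_{j}$ is a chosen lift of $\phi^{j}(P)$. Provided each exceptional error is bounded by $C\sqrt{h_{H}(\phi^{k}(P))}$ with $C$ independent of $m$ and $k$, the resulting inequality has exactly the shape of~(\ref{root ineq}); then Lemma~\ref{seq lem} combined with the matrix bound $\|A^{m}\|\le C'm^{r-1}\delta^{m}$ gives $h_{H}(\phi^{m}(P))\le C''m^{r-1}\delta^{m}h_{H}(P)$, and absorbing the polynomial factor $m^{r-1}$ into a slightly enlarged base produces the required $C_{0}(\delta+\delta_{0})^{m}h_{X}^{+}(P)$ bound. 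Combining with the reduction in the first paragraph yields $h_{X}^{+}(f^{n}(P))\le C(\delta_{f}+\epsilon)^{n}h_{X}^{+}(P)$ for all $n$, and the corollary $\overline{\alpha}_{f}(P)\le\delta_{f}$ follows by taking $n$-th roots.

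The main obstacle is controlling the exceptional error $h_{F_{H}}(\tilde P)$ (and the analogous $h_{F_{D_{i}}}$) by $C\sqrt{h_{H}(P)}$ with $C$ independent of $m$. Because $F_{H}$ is only $\pi$-exceptional and not numerically trivial on $W$, the standard $\sqrt{h}$-bound on $W$ does not apply directly. The remedy is to exploit $\pi_{*}F_{H}=0$: write $F_{H}$ as an $\mathbb{R}$-linear combination of prime $\pi$-exceptional divisors, and bound the corresponding heights, after a judicious choice of the lift $\tilde P$, by heights on $X$ of divisors that are numerically trivial (or controlled by such), so that Proposition~\ref{root bound on proj var} can be applied after pullback. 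Fixing $N$ at the very outset—rather than letting it vary inside the iteration as implicitly happens in \cite{ks}—is precisely what keeps $\pi$ and all its associated constants independent of the outer iteration index, and this is what avoids the mistake identified in Remark~\ref{error}.
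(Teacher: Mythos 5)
Your proposal correctly identifies the high-level strategy shared with the paper: pass to a high iterate $\phi=f^{N}$, fix the resolution of $\phi$ once and for all, and argue with constants that do not depend on the iteration count. However, the core step of your argument — ``telescoping $h_{H}(\phi^{m}(P))$ exactly as in the morphism proof'' to produce the main term $\langle A^{m}\vec c,\mathbf{h}_{\vec D}\rangle(P)$ — does not go through, and the gap is precisely the one you flag in your last paragraph.

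The problem is that the accumulated exceptional terms $\sum_{k}\langle A^{k}\vec c,\mathbf{h}_{\vec F}\rangle(\tilde P_{m-1-k})$ cannot be bounded by $C\sqrt{h_{H}}$, nor can they be discarded. The divisors $F_{i}=g^{*}D_{i}-\pi^{*}\phi^{*}D_{i}=-Z_{i}$ are $\pi$-exceptional but not numerically trivial on the resolution, so no $\sqrt{h}$-type estimate applies; a $\pi$-exceptional height at a lift $\tilde P_{j}$ measures proximity of $\phi^{j}(P)$ to the indeterminacy locus and is not controlled by $h_{H}(\phi^{j}(P))$ in any useful way. The alternative — dropping these terms because $-Z_{i}$ is anti-effective — only works when the coefficients multiplying $-h_{Z_{i}}$ are non-negative, and while $\vec c$ can be chosen with non-negative entries, $A^{k}\vec c$ will not have non-negative entries for $k\ge 1$ in general. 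Your proposed remedy via $\pi_{*}F_{H}=0$ cannot rescue this: push-forward vanishing does not produce any pullback relation that would let one transfer the bound to a numerically trivial divisor on $X$.

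The paper sidesteps this by \emph{not} telescoping the matrix. It derives a \emph{one-step} identity (equation~(\ref{funddef})) of the form
\[
h_{H}(f^{n}(P))=\langle \vec c,-\mathbf{h}_{\vec Z}\rangle(p^{-1}f^{n-1}(P))+\langle M(f)\vec c,\mathbf{h}_{\vec D}\rangle(f^{n-1}(P))+(\text{numerically trivial errors}),
\]
in which the exceptional term carries only the fixed vector $\vec c$, not $A^{k}\vec c$. Since $\vec c\ge 0$ by the choice of $H$ and $h_{Z_{i}}\ge 0$, the exceptional term is $\le 0$ and is simply discarded. Iterating the resulting scalar recursion (Lemma~\ref{seq lem0}) yields the crude bound $h_{H}(f^{n}(P))\le Kn^{2}R^{n}h_{H}(P)$ with $R=\max\{1,r^{2}\|\vec c\|\,\|M(f)\|\}$ — a matrix-norm bound, not a spectral-radius bound. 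The sharp exponent $\delta_{f}+\epsilon$ is then recovered only by applying this crude estimate to $f^{k}$ for $k$ large, using $\lim_{k}\|M(f^{k})\|^{1/k}=\delta_{f}$. So the passage to a high iterate is what controls the base of the exponential; inside the single-step recursion, nothing finer than the matrix norm is attempted, and no matrix powers (hence no sign-indefinite coefficient vectors) ever multiply the exceptional heights. Your proposal tries to extract the spectral radius directly by telescoping, which is exactly where the exceptional heights become uncontrollable.
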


We deduce this theorem from the following theorem.

\begin{Thm}\label{FI}
Let $X$ be a smooth projective variety over $ \overline{\mathbb Q}$ and $f \colon X \dashrightarrow X$
be a dominant rational map defined over $ \overline{\mathbb Q}$ with first dynamical degree $\delta_{f}$.
Fix an ample height function $h_{X}$ on $X$.
Then, for any $\epsilon>0$, there exist a positive integer $k$ and a constant $C>0$ such that
\[
h_{X}^{+}(f^{nk}(P))\leq C(\delta_{f}+\epsilon)^{nk}h_{X}^{+}(P)
\]
for all $n\geq0$ and $P \in X_{f}( \overline{\mathbb Q})$.
\end{Thm}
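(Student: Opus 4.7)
The plan is to mimic the argument for Theorem \ref{bound for mor case} applied to a sufficiently high iterate $g := f^{k}$, chosen so that the linear action of $g^{*}$ on $N^{1}(X)_{\mathbb R}$ already approximates the dynamical rate sharply. Given $\epsilon > 0$, by the very definition of $\delta_{f}$ I first fix $k$ so that $\rho((f^{k})^{*}) \leq (\delta_{f} + \epsilon/2)^{k}$. Let $A$ denote the matrix of $g^{*} = (f^{k})^{*}$ on $N^{1}(X)_{\mathbb R}$ in a chosen basis $D_{1},\dots,D_{r}$, and take a resolution $\pi \colon Y \to X$ of the indeterminacy of $g$ with $Y$ smooth, so that $\tilde g := g \circ \pi \colon Y \to X$ is a genuine morphism. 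Choose an ample $H$ on $X$ with $H \pm D_{i}$ ample, write $H \equiv \langle \vec c, \vec D\rangle$, and fix heights $h_{D_{i}}, h_{H}$ normalized as in \S\ref{mor case}.

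The centerpiece is a one-step identity on $X_{f}$ analogous to (\ref{morhtE}), (\ref{morhtvecE}). Since $\tilde g$ is a morphism, functoriality of heights is exact on $Y$; so for $P \in X \setminus I_{g}$ with $\tilde P := \pi^{-1}(P)$, one has $h_{H}(g(P)) = h_{\tilde g^{*} H}(\tilde P) + O(1)$. Writing $\tilde g^{*} H = \pi^{*}(g^{*} H) + F_{H}$ with $F_{H}$ $\pi$-exceptional on $Y$, and $g^{*} H \equiv \langle A\vec c, \vec D\rangle$ modulo a numerically trivial class $E$ on $X$, I obtain
\[
h_{H}(g(P)) = \langle A\vec c, \vec h_{\vec D}(P)\rangle + h_{E}(P) + h_{F_{H}}(\tilde P) + O(1),
\]
where $|h_{E}| = O(\sqrt{h_{H}})$ by Proposition \ref{root bound on proj var}, and an analogous identity $h_{D_{i}}(g(P)) = \sum_{j} a_{ji}\, h_{D_{j}}(P) + h_{E_{i}}(P) + h_{F_{i}}(\tilde P) + O(1)$ holds with $\pi$-exceptional $F_{i}$. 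Iterating $n$ times, exactly in parallel with the telescoping computation following (\ref{morhtvecE}), expresses $h_{H}(f^{nk}(P)) = h_{H}(g^{n}(P))$ as $\langle A^{n} \vec c, \vec h_{\vec D}(P)\rangle$ plus weighted sums of errors of two types: (a) the root-bounded errors $\langle A^{m} \vec c, \vec h_{\vec E}\rangle(g^{j}(P))$ from numerical triviality on $X$, and (b) the $\pi$-exceptional errors $h_{F_{*}}(\widetilde{g^{j}(P)})$ from the resolution. Granting that (b) can be absorbed into the same square-root control as (a), the bookkeeping of \S\ref{mor case} — using $\|A^{n}\| = O(n^{r-1}\rho(A)^{n})$ together with Lemma \ref{seq lem} — delivers $h_{H}(f^{nk}(P)) \leq C\, n^{r-1}\rho(A)^{n}\, h_{H}(P)$. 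Substituting $\rho(A)^{1/k} \leq \delta_{f} + \epsilon/2$ and absorbing the polynomial factor $n^{r-1}$ into the exponential then yields the claimed $h_{X}^{+}(f^{nk}(P)) \leq C'(\delta_{f}+\epsilon)^{nk}\, h_{X}^{+}(P)$.

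The main obstacle is the uniform control of the $\pi$-exceptional errors $h_{F_{H}}(\widetilde{g^{j}(P)})$ and $h_{F_{i}}(\widetilde{g^{j}(P)})$ along the orbit. Since these divisors are not numerically trivial on $Y$, Proposition \ref{root bound on proj var} does not apply directly. The crucial observation that enables a cure is that for $P \in X_{f}$ the orbit $g^{j}(P)$ never enters $I_{g}$, so every $\widetilde{g^{j}(P)}$ lies in the open locus where $\pi$ is an isomorphism — in particular off the support of each $F_{*}$. Following the strategy announced in the introduction, the plan is to express every $F_{*}$ via the direct-sum decomposition $N^{1}(Y) = \pi^{*} N^{1}(X) \oplus N^{1}_{\pi\text{-exc}}(Y)$ and rewrite the corresponding height correction on $X \setminus I_{g}$ as a linear combination of \emph{fixed} height functions on $X$ whose coefficients are dictated by the matrix $A$ and can be controlled uniformly in the iteration. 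Carrying out this decomposition tightly enough that the $\pi$-exceptional errors are absorbed into the same square-root control as (a), uniformly in $n$, is the technical heart of the argument, and is exactly what distinguishes the rational-map case from the cleaner morphism case.
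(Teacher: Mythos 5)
You have correctly located the difficulty — the $\pi$-exceptional height corrections cannot be controlled by Proposition \ref{root bound on proj var} because they are not numerically trivial on $Y$ — and you correctly observe that for $P\in X_{f}$ the lift $\widetilde{g^{j}(P)}$ stays in the locus where $\pi$ is an isomorphism, hence off the exceptional divisor. But the cure you then announce does not work. Projecting an exceptional class to $\pi^{*}N^{1}(X)$ gives zero, so there is no way to ``rewrite the corresponding height correction on $X\setminus I_{g}$ as a linear combination of fixed height functions on $X$'' with controllable coefficients; a class in $N^{1}_{\pi\text{-exc}}(Y)$ simply does not descend. If you try to bound $|h_{F_{*}}(\widetilde{g^{j}(P)})|$ by a square root of an ample height on $Y$ uniformly in $j$, you are back to the problem you started with.

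The ingredient you are missing is a sign argument, not a magnitude bound. In the paper's proof one arranges the resolution $p\colon Y\to X$ (a sequence of smooth blow-ups over $I_{f}$) so that the negativity lemma (\cite[Lemma 3.39]{km}) applies: the correction $Z_{i}=p^{*}p_{*}g^{*}D_{i}-g^{*}D_{i}$ is an \emph{effective} divisor supported in $\mathrm{Exc}(p)$. For an effective divisor one can choose the associated height so that $h_{Z_{i}}\geq 0$ off $\mathrm{Supp}(Z_{i})$, and the ample $H$ is chosen with $H\equiv\sum c_{i}D_{i}$, $c_{i}\geq 0$. Consequently, at orbit points (which lie off the exceptional locus) the whole exceptional contribution $\left<\vec{c},-\mathbf{h}_{\vec{Z}}\right>(p^{-1}f^{n-1}(P))$ is $\leq 0$ and can simply be \emph{dropped} from the upper estimate. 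Without this, your telescoping accumulates uncontrolled error terms.

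A smaller remark on the route: the paper does not telescope $n$ steps via $A^{n}$ as in \S\ref{mor case}. It instead derives the one-step inequality $a_{n}\leq a_{n-1}+C_{1}\bigl(\sqrt{a_{n-1}}+\sqrt{a_{n-1}+C_{2}}\bigr)$ for $a_{n}=h_{H}(f^{n}(P))/R^{n}$ with the crude rate $R=r^{2}\|\vec{c}\|\,\|M(f)\|$, applies Lemma~\ref{seq lem0} to get $h_{H}(f^{n}(P))\leq Kn^{2}R^{n}h_{H}(P)$, and only then chooses $k$ so large that $r^{2}\|\vec{c}\|\,\|M(f^{k})\|<(\delta_{f}+\epsilon)^{k}$. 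Your idea of choosing $k$ by the spectral radius and using $\|A^{n}\|=O(n^{r-1}\rho(A)^{n})$ would also close, since the polynomial factor is eventually absorbed by replacing $\epsilon/2$ with $\epsilon$; but either way the argument hinges on the negativity lemma and the sign of the exceptional term, which your write-up does not supply.
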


\begin{Lem}\label{uniform bound lem}
In the situation of Theorem \ref{FI}, there exists a constant $C_{0}\geq 1$ such that
\[
h_{X}^{+}(f^{n}(P)) \leq C_{0}^{n}h_{X}^{+}(P)
\]
for all $n\geq0$ and $P \in X_{f}( \overline{\mathbb Q})$.
\end{Lem}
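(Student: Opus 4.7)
My plan is to reduce the statement to a one-step linear estimate and then iterate. Concretely, I aim to establish a constant $C_0\geq 1$ such that
\[
h_X^+(f(P))\leq C_0\, h_X^+(P)
\]
for every $P\in X(\overline{\mathbb Q})$ at which $f$ is defined. Granting this, the lemma follows by a trivial induction on $n$: for $P\in X_f(\overline{\mathbb Q})$ each iterate $f^k(P)$ again lies in $X_f(\overline{\mathbb Q})$ and in particular avoids $I_f$, so repeated application of the one-step bound yields $h_X^+(f^n(P))\leq C_0^n h_X^+(P)$.

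The cleanest route to the one-step bound is via the standard height-functoriality estimate for rational maps into projective space. Fixing an embedding $\iota\colon X\hookrightarrow \mathbb P^N$ associated to the ample divisor defining $h_X$, the composite $\iota\circ f\colon X\dashrightarrow \mathbb P^N$ is a rational map, and the classical estimate (see e.g.\ \cite[Theorem B.2.5]{hs}) supplies constants $c,c'>0$ with $h(\iota(f(P)))\leq c\, h_X(P)+c'$ at every $P$ in the domain of $f$. Since $h_X^+\geq 1$, the additive constant $c'$ is absorbed by a slight enlargement of the multiplicative constant, producing a uniform $C_0\geq 1$ of the required form.

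An intrinsic alternative, closer in spirit to the methods used elsewhere in the paper, is to choose a resolution of indeterminacy $\pi\colon Y\to X$ with $Y$ smooth and $g=f\circ\pi\colon Y\to X$ a morphism, and fix ample divisors $H$ on $X$ and $A$ on $Y$. Since $g$ is a morphism and $H$ is ample, there exists $N$ with $NA-g^*H$ ample, which via ordinary functoriality for morphisms gives $h_H\circ g\leq N\, h_A+O(1)$ on $Y(\overline{\mathbb Q})$. One then compares $h_A$ with $h_H\circ \pi$ by using that $\pi^*H$ is big and nef: a Kodaira-type decomposition writes a positive multiple of $\pi^*H$ as $A$ plus an effective divisor $E$, and working with a suitably chosen representative of $E$ avoiding the image loci of interest yields $h_A\leq M\, h_H\circ \pi+O(1)$ off a controllable exceptional set.

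The main (and only real) obstacle is the one-step estimate itself: height functoriality fails at points of $I_f$, so some care is needed to see that away from the indeterminacy locus there is a uniform linear bound, independent of the point. Once this is granted, the exponential-in-$n$ bound claimed in the lemma is completely formal, and no control on $\delta_f$ enters at this stage --- the purpose of Lemma \ref{uniform bound lem} is only to provide the crude envelope $C_0^n$ that will later be refined using the subadditive-type behaviour of spectral radii of $(f^n)^*$.
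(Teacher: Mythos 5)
Your strategy matches the paper exactly: establish a one-step inequality $h_H(f(P))\leq C_0\,h_H(P)$ for all $P\notin I_f$ and iterate, noting that no control on $\delta_f$ enters at this stage. The paper obtains the one-step bound by citing \cite[Proposition 21]{ks}, which gives $h_H\circ f\leq h_{f^*H}+O(1)$ on $X\setminus I_f$, and then uses that $f^*H$ (being a divisor class on a projective variety) is dominated by a multiple of the ample $H$, so $h_{f^*H}+O(1)\leq C_0 h_H$ once $h_H\geq 1$; your ``intrinsic alternative'' via a resolution $\pi\colon Y\to X$, comparing $h_A$ with $h_H\circ\pi$ by pushing an effective $p$-exceptional difference off the exceptional locus, is essentially the proof of that proposition, so this route is sound and is what the paper does.

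One caution on your primary route: \cite[Theorem B.2.5]{hs} treats rational maps $\mathbb P^n\dashrightarrow\mathbb P^m$, not $X\dashrightarrow\mathbb P^N$. To apply it you would need to extend $\iota\circ f$ to a rational self-map of $\mathbb P^N$, and such an extension can acquire indeterminacy points on $X\setminus I_f$ (the global forms cutting out the linear system on $X$ need not be base-point-free exactly along $I_f$), so ``at every $P$ in the domain of $f$'' is not automatic from that citation alone. Your second route (or, equivalently, invoking \cite[Proposition 21]{ks}) avoids this issue and is the one to carry through.
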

\begin{proof}
Let $H$ be an ample divisor on $X$.
Take a height function $h_{H}$ associated with $H$ so that $h_{H}\geq 1$.
Let $h_{f^{*}H}$ be a height function associated with $f^{*}H$.
Then, from \cite[Proposition 21]{ks}
\[
h_{H}(f(P)) \leq h_{f^{*}H}(P)+O(1)
\]
for all $P \in X_{f}( \overline{\mathbb Q})$.
Here $O(1)$ is a bounded function on $X_{f}( \overline{\mathbb Q})$ which depends on
$f, H, f^{*}H, h_{H}, h_{f^{*}H}$ but is independent of $P$.
Since $H$ is ample and $h_{H}\geq1$, for a sufficiently large $C_{0}\geq1$, we have
\[
h_{f^{*}H}(P)+O(1) \leq C_{0}h_{H}(P)
\]
for all $P \in X_{f}( \overline{\mathbb Q})$.
Thus, we get
\[
h_{H}(f(P)) \leq C_{0}h_{H}(P)
\]
for all $P \in X_{f}( \overline{\mathbb Q})$.
Therefore
\[
h_{H}(f^{n}(P)) \leq C_{0}^{n}h_{H}(P).
\]
By Remark \ref{remark on heights} or the same argument at the end of the proof of Theorem \ref{bound for mor case}(2), this proves
the statement.
\end{proof}

\begin{proof}[Proof of Theorem \ref{FI} $\Longrightarrow$ Theorem \ref{MI}]
From Theorem \ref{FI}, for any $\epsilon>0$, there exist a positive integer $k$ and a positive constant
$C>0$ such that
\[
h_{X}^{+}(f^{nk}(P))\leq C(\delta_{f}+\epsilon)^{nk}h_{X}^{+}(P)
\]
for all $n\geq0$ and $P \in X_{f}( \overline{\mathbb Q})$.
For any integer $m\geq0$, we write $m=qk+t\ q\geq0, 0\leq t <k$.
Let $C_{0}$ be the constant in Lemma \ref{uniform bound lem}.
Then for any $P \in X_{f}( \overline{\mathbb Q})$,
\begin{align*}
h_{X}^{+}(f^{m}(P)) &\leq C(\delta_{f}+\epsilon)^{qk}h_{X}^{+}(f^{t}(P))\\
&\leq CC_{0}^{t}(\delta_{f}+\epsilon)^{qk}h_{X}^{+}(P)\\
&\leq CC_{0}^{k-1}(\delta_{f}+\epsilon)^{m}h_{X}^{+}(P).
\end{align*}
This proves the first statement in Theorem \ref{MI}.

The second statement is an easy consequence of the first one.
That is,
\begin{align*}
\overline{\alpha}_{f}(P) &= \limsup_{n\to \infty}h_{X}^{+}(f^{n}(P))^{1/n}\\
&\leq \limsup_{n\to \infty}\left(Ch_{X}^{+}(P)\right)^{1/n}(\delta_{f}+\epsilon)\\
&= \delta_{f}+\epsilon.
\end{align*}
Since $\epsilon$ is arbitrary, we get $ \overline{\alpha}_{f}(P) \leq \delta_{f}$.
\end{proof}

Before starting the proof of Theorem \ref{FI}, we prove an interesting corollary.

\begin{Cor}\label{divisible}
In the situation of Theorem \ref{FI}, 
\[
\overline{\alpha}_{f}(P)=\limsup_{n\to \infty}h_{X}^{+}(f^{nk}(P))^{1/nk}=\overline{\alpha}_{f^{k}}(P)^{1/k}
\]
for any $k>0$ and any point $P\in X_{f}(\overline{ {\mathbb{Q}}})$.
\end{Cor}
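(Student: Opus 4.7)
The plan is to handle the two equalities in sequence. The rightmost equality
\[
\limsup_{n\to \infty}h_{X}^{+}(f^{nk}(P))^{1/nk}=\overline{\alpha}_{f^{k}}(P)^{1/k}
\]
is immediate from the definition: since $(f^{k})^{n}=f^{nk}$, one has $\overline{\alpha}_{f^{k}}(P)=\limsup_{n}h_{X}^{+}(f^{nk}(P))^{1/n}$, and a $k$-th root converts this into the stated form. So all the content lies in proving that the $\limsup$ along the arithmetic progression $(nk)_{n\geq 1}$ already computes $\overline{\alpha}_{f}(P)$.

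Write $b_{m}=h_{X}^{+}(f^{m}(P))^{1/m}$ and let $L=\limsup_{n\to \infty}b_{nk}$. The easy direction is $L \le \overline{\alpha}_{f}(P)$, which holds because $(b_{nk})_{n}$ is a subsequence of $(b_{m})_{m}$. For the reverse inequality I would bridge an arbitrary index $m$ to the nearest earlier multiple of $k$ using Lemma \ref{uniform bound lem}. Write $m=qk+t$ with $0\leq t <k$, let $C_{0}\geq 1$ be the constant from that lemma, and observe
\[
h_{X}^{+}(f^{m}(P))=h_{X}^{+}(f^{t}(f^{qk}(P)))\leq C_{0}^{t}\,h_{X}^{+}(f^{qk}(P))\leq C_{0}^{k-1}\,h_{X}^{+}(f^{qk}(P)).
\]

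Taking $(1/m)$-th roots and rewriting the right side as $C_{0}^{(k-1)/m}\bigl(h_{X}^{+}(f^{qk}(P))^{1/qk}\bigr)^{qk/m}$, I would fix $\epsilon>0$, choose $N$ so that $h_{X}^{+}(f^{nk}(P))^{1/nk}\leq L+\epsilon$ for all $n\geq N$, and then restrict to $m\geq Nk$, so that $q\geq N$. Letting $m\to \infty$ gives $qk/m\to 1$ and $C_{0}^{(k-1)/m}\to 1$, hence $\limsup_{m}b_{m}\leq L+\epsilon$. Letting $\epsilon\to 0$ yields $\overline{\alpha}_{f}(P)\leq L$.

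There is no serious obstacle here; the one point that requires mild care is the monotonicity step $\bigl(h_{X}^{+}(f^{qk}(P))^{1/qk}\bigr)^{qk/m}\leq (L+\epsilon)^{qk/m}$, which uses that both bases are $\geq 1$ (guaranteed by $h_{X}^{+}\geq 1$ and $L\geq 1$) so that $x\mapsto x^{qk/m}$ is increasing. Note also that by Theorem \ref{MI} the quantity $L$ is finite (bounded above by $\delta_{f}$), so no divergence issues arise in the $\limsup$ manipulations.
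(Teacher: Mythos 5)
Your proof is correct and follows essentially the same route as the paper: both use Lemma \ref{uniform bound lem} to bound $h_X^+(f^{qk+t}(P))$ by $C_0^{k-1}h_X^+(f^{qk}(P))$ and then show the $\limsup$ along the arithmetic progression $(nk)$ already captures $\overline{\alpha}_f(P)$. The paper phrases the bridging step as $\limsup_{n}\max_{0\leq i<k}$ rather than via an explicit $\epsilon$--$N$ argument, but the content is identical.
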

\begin{proof}
We compute
\begin{align*}
\overline{\alpha}_{f}(P)&=\limsup_{m\to \infty}h_{X}^{+}(f^{m}(P))^{1/m}\\
&=\limsup_{n\to \infty}\max_{0\leq i <k}h_{X}^{+}(f^{nk+i}(P))^{1/nk+i}\\
&\leq \limsup_{n\to \infty}\max_{0\leq i<k}(C_{0}^{i}h_{X}^{+}(f^{nk}(P)))^{1/nk+i} & \qquad \text{by Lemma \ref{uniform bound lem}}\\
&\leq \limsup_{n\to \infty} (C_{0}^{k-1}h_{X}^{+}(f^{nk}(P)))^{1/nk}\\
&=\limsup_{n\to \infty}h_{X}^{+}(f^{nk}(P))^{1/nk}\\
&\leq \overline{\alpha}_{f}(P).
\end{align*}
Then we have $ \overline{\alpha}_{f}(P)=\limsup_{n\to \infty}h_{X}^{+}(f^{nk}(P))^{1/nk}= \overline{\alpha}_{f^{k}}(P)^{1/k}$.
\end{proof}

Now we turn to the proof of Theorem \ref{FI}.

\begin{proof}[Proof of Theorem \ref{FI}]

Let $D_{1},\dots , D_{r}$ be very ample divisors on $X$ which forms a basis for $ N^{1}(X)_{\mathbb R}$.
Take an ample divisor $H$ on $X$ so that $H\pm D_{i},\ i=1,\dots,r$ are ample and 
if we write $H \equiv \sum_{i=1}^{r}c_{i}D_{i}$ then $c_{i}\geq 0$.

We take a resolution of indeterminacy $p \colon Y \longrightarrow X$ of $f$ as follows.
$p$ is a sequence of blowing ups at smooth centers and the images of centers in $X$
are contained in the indeterminacy locus $I_{f}$ of $f$.
Let $g=f\circ p$.
\[
\xymatrix{
&Y \ar[ld]_{p} \ar[rd]^{g}&\\
X \ar@{-->}[rr]_{f}&&X
}
\]
Let $ {\rm Exc}(p)$ be the exceptional locus of $p$.
By the negativity lemma (see for example \cite[Lemma 3.39]{km}),
\[
Z_{i}=p^{*}{p}_{*}g^{*}D_{i}-g^{*}D_{i}
\]
is an effective divisor on $Y$ whose support is contained in $ {\rm Exc}(p)$.
Let $F_{i}=g^{*}D_{i}$ for $i=1,\dots,r$.
Then,
\begin{align}
\label{Z_{i}}
p^{*}{p}_{*}F_{i}-F_{i} = Z_{i}.
\end{align}
Take divisors $F_{r+1},\dots,F_{s}$ on $Y$ so that $F_{1},\dots ,F_{s}$
forms a basis for $ N^{1}(Y)_{\mathbb R}$.
There exists an ample $ {\mathbb{Q}}$-divisor ${H'}$ on $Y$ such that $p^{*}H-{H'}$
is an effective $ {\mathbb{Q}}$-divisor whose support is contained in $ {\rm Exc}(p)$.
Indeed, take an effective $p$-exceptional divisor $G$ such that $-G$ is $p$-ample.
(For the existence of such a divisor, see for example \cite[Lemma 2.62]{km}).
Then, for sufficiently large $N>0$, $H'=-\frac{1}{N}G+p^{*}H$ satisfies desired properties.
Let
\begin{align}
\label{a_{mi}}
&g^{*}D_{i} \equiv \sum_{m=1}^{s}a_{mi}F_{m}\ \ (i=1,\dots r)\\
\label{b_{lj}}
&{p}_{*}F_{j} \equiv \sum_{l=1}^{r}b_{lj}D_{l} \ \ (j=1,\dots, s)
\end{align}
and
\begin{align*}
&A=(a_{mi})_{mi}\ \ \ s\times r\text{-matrix}\\
&B=(b_{lj})_{lj}\ \ r\times s\text{-matrix}.
\end{align*}
By the definition of $F_{j}$, $A$ is the following form.
\begin{align}
\label{form of A}
A=\left(
\begin{array}{ccc}
1& &\\
&\ddots& \\
&&1\\
&&
\end{array}
\right).
\end{align}
Note that $BA$ is the representation matrix of $f^{*}$
with respect to the basis $D_{1},\dots,D_{r}$.
We write
\[
\vec{D}=\left( 
\begin{array}{c}
D_{1}\\
D_{2}\\
\vdots \\
D_{r}
\end{array}
\right), 
\vec{F}=\left( 
\begin{array}{c}
F_{1}\\
F_{2}\\
\vdots \\
F_{s}
\end{array}
\right),
\vec{c}=\left( 
\begin{array}{c}
c_{1}\\
c_{2}\\
\vdots \\
c_{r}
\end{array}
\right),
\vec{Z}=\left( 
\begin{array}{c}
Z_{1}\\
Z_{2}\\
\vdots \\
Z_{r}
\end{array}
\right).
\]
Let
\begin{align}
\label{E}
&E=g^{*}H-\left<A \vec{c}, \vec{F}\right>\\
\label{vecE'}
& \vec{{E'}}=\left( 
\begin{array}{c}
E_{1}'\\
E_{2}'\\
\vdots \\
E_{s}'
\end{array}
\right)
={p}_{*} \vec{F}-{}^{\rm t}B \vec{D}.
\end{align}
These are numerically zero divisors.

{\bf The choice of height functions}.

Fix height functions $h_{D_{1}},\dots,h_{D_{r}}$ associated with $D_{1},\dots,D_{r}$.
Fix a height function $h_{H}$ associated with $H$ so that
$h_{H}\geq1$ and $h_{H}\geq |h_{D_{i}}|$ for $i=1,\dots,r$.
Note that $h_{D_{1}},\dots,h_{D_{r}}$ and $h_{H}$ are independent of $f$.
 
We define $h_{F_{j}}=h_{D_{j}}\circ g,\ j=1,\dots,r$.
These are height functions associated with $F_{j}$.
For $j=r+1,\dots,s$, fix any height functions $h_{F_{j}}$ associated with $F_{j}$.
Fix height functions $h_{p_{*}F_{j}}$ associated with $p_{*}F_{j}$ for $j=1,\dots,s$.
We write
\begin{align*}
&{\bf h}_{\vec{D}}=\left(
\begin{array}{c}
h_{D_{1}}\\
h_{D_{2}}\\
\vdots \\
h_{D_{r}}
\end{array}
\right),\ 
{\bf h}_{\vec{F}}=\left(
\begin{array}{c}
h_{F_{1}}\\
h_{F_{2}}\\
\vdots \\
h_{F_{s}}
\end{array}
\right),\ 
{\bf h}_{{p}_{*}\vec{F}}=\left(
\begin{array}{c}
h_{{p}_{*}F_{1}}\\
h_{{p}_{*}F_{2}}\\
\vdots \\
h_{{p}_{*}F_{s}}
\end{array}
\right)\ .
\end{align*}
Define
\begin{align}
\label{htE'}
&{\bf h}_{\vec{{E'}}}=
\left(
\begin{array}{c}
h_{E_{1}'}\\
h_{E_{2}'}\\
\vdots \\
h_{E_{s}'}
\end{array}
\right)
={\bf h}_{{p}_{*}\vec{F}}-{}^{\rm t}B {\bf h}_{\vec{D}}\\
\label{htE}
&h_{E}=h_{H}\circ g-\left<A \vec{c}, {\bf h}_{\vec{F}}\right>\\
\label{htZ}
& {\bf h}_{\vec{Z}}=
\left(
\begin{array}{c}
h_{Z_{1}}\\
h_{Z_{2}}\\
\vdots \\
h_{Z_{r}}
\end{array}
\right)
=\left(
\begin{array}{c}
h_{{p}_{*}F_{1}}\\
h_{{p}_{*}F_{2}}\\
\vdots \\
h_{{p}_{*}F_{r}}
\end{array}
\right)
\circ p-
\left(
\begin{array}{c}
h_{F_{1}}\\
h_{F_{2}}\\
\vdots \\
h_{F_{r}}
\end{array}
\right)\ .
\end{align}

By (\ref{vecE'}), (\ref{E}) and (\ref{Z_{i}}), 
$h_{E_{j}'}$ is a height function associated with $E_{j}'$ for $j=1,\dots, s$, $h_{E}$ is the one with $E$
and $h_{Z_{i}}$ is the one with $Z_{i}$ for $i=1,\dots,r$.
By adding a bounded function to $h_{{p}_{*}F_{i}}$, we may assume that $h_{Z_{i}} \geq 0$
on $Y\setminus Z_{i}$
(see for example \cite[Theorem B.3.2(e)]{hs}).
Fix a height function $h_{{H'}}\geq1$ associated with ${H'}$.
Fix a height function $h_{p^{*}H-{H'}}$ associated with $p^{*}H-{H'}$
so that $h_{p^{*}H-{H'}}\geq 0$ on $Y\setminus {\rm Exc}(p)$.
Note that there exists a constant $\gamma \geq0$ such that
\begin{align}
\label{leqgamma}
h_{H}\circ p \geq h_{p^{*}H-{H'}}+h_{{H'}}-\gamma\ \ \ \text{on $Y( \overline{\mathbb Q})$}. 
\end{align}

Since $E, {E'_{j}}$ are numerically zero, there exists a constant $C>0$ such that
\begin{align}
\label{rootboundE}
&|h_{E}|\leq C \sqrt[]{h_{{H'}}}\\
\label{rootboundE'}
&|h_{{E'_{j}}}|\leq C \sqrt[]{h_{H}}.
\end{align}

Let $M(f)$ be the representation matrix of the linear map
$f^{*} \colon  N^{1}(X)_{\mathbb R} \longrightarrow N^{1}(X)_{\mathbb R}$
with respect to the basis $D_{1},\dots,D_{r}$.

\begin{Claim}
Let $R= \max\{1, r^{2}\| \vec{c}\|\|M(f)\|\}$.
Then there exists $K>0$ such that
\[
h_{H}(f^{n}(P)) \leq Kn^{2}R^{n}h_{H}(P)
\]
for all $n\geq1$ and $P\in X_{f}( \overline{\mathbb Q})$.
Note that the constant $K$ depends on $f$ but
$h_{H}, r, \vec{c}$ and $D_{1},\dots,D_{r}$ do not depend on $f$.
\end{Claim}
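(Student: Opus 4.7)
My plan is to reduce the Claim to the one-step inequality
\[
h_H(f(P)) \le R\, h_H(P) + C_1 \bigl(\sqrt{h_H(P)} + 1\bigr), \qquad P \in X_f(\overline{\mathbb Q}),
\]
where $C_1$ depends on $f$ but not on $P$, and then to iterate this recurrence. Once the one-step bound is in hand, a standard induction (in the spirit of Lemma \ref{seq lem} used in \S\ref{mor case}) yields $h_H(f^n(P)) \le K n^2 R^n h_H(P)$: when $R>1$ the iteration even produces a clean $K R^n$ bound, whereas the $n^2$ factor is forced in the borderline case $R=1$ by the usual square-root recurrence trick.

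For the one-step bound, given $P \in X_f(\overline{\mathbb Q})$, I set $Q = p^{-1}(P) \in Y$, which is well defined and lies outside $\mathrm{Exc}(p)$ since $P \notin I_f$. Combining (\ref{htE}) at $Q$, the identity $h_{F_i}(Q) = h_{p_*F_i}(P) - h_{Z_i}(Q)$ from (\ref{htZ}) (for $i \le r$), the relation $h_{p_*F_i}(P) = h_{E_i'}(P) + \sum_l b_{li} h_{D_l}(P)$ coming from (\ref{htE'}), and the observation (from (\ref{form of A})) that $A\vec c$ has first $r$ entries $c_1,\ldots,c_r$ and the rest zero, I obtain the exact formula
\[
h_H(f(P)) = h_E(Q) + \sum_{i=1}^{r} c_i h_{E_i'}(P) - \sum_{i=1}^{r} c_i h_{Z_i}(Q) + \langle M(f)\vec c,\, \mathbf{h}_{\vec D}(P)\rangle.
\]
Because $c_i \ge 0$ (by our choice of $H$) and $h_{Z_i}(Q) \ge 0$ (since $Q$ avoids $\mathrm{supp}\,Z_i$, by our normalization of $h_{Z_i}$), the third summand is non-positive and can be dropped when passing to an upper bound. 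The first two terms are then controlled by (\ref{rootboundE}), (\ref{rootboundE'}) and (\ref{leqgamma}), producing the $\sqrt{h_H(P)}+1$ error; and the main term satisfies $|\langle M(f)\vec c,\, \mathbf{h}_{\vec D}(P)\rangle| \le r^2\|\vec c\|\|M(f)\|\, h_H(P) \le R\, h_H(P)$ via $|h_{D_l}(P)| \le h_H(P)$ and the definition of $R$.

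The main technical obstacle I anticipate is that the exceptional terms $h_{Z_i}(Q)$ admit no square-root upper bound in terms of $h_H(P)$, so I must arrange that they enter the identity with the correct sign and can therefore be discarded; this is exactly why the normalization $c_i \ge 0$ is crucial. Once the one-step inequality is secured, iterating $a_{k+1} \le R\, a_k + C_1(\sqrt{a_k}+1)$ with $a_k = h_H(f^k(P)) \ge 1$ is routine: for $R=1$ the substitution $c_k = \sqrt{a_k + 1}$ linearizes the recursion into $c_{k+1} \le c_k + C_1'$, yielding $a_n = O(n^2 a_0)$; and for $R>1$, dividing by $R^n$ and summing the convergent geometric series $\sum R^{-k/2}$ gives $a_n \le K R^n a_0$. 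Combining these two cases produces the claimed bound with $K$ depending only on $f$ and the fixed choices of $h_H, h_{D_i}$ and the resolution data.
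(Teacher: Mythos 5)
Your proof is correct and takes essentially the same route as the paper's: the one-step identity you assemble from (\ref{htE}), (\ref{htZ}), (\ref{htE'}), the sign observation that $c_{i}\ge 0$ and $h_{Z_{i}}\ge 0$ off $\mathrm{Exc}(p)$ (so the exceptional term can be discarded), and the resulting square-root recurrence are exactly what appear in the paper's chain of (in)equalities (\ref{funddef}). The only cosmetic difference is that the paper treats the iteration uniformly by dividing by $R^{n}$ and invoking Lemma \ref{seq lem0} rather than splitting into $R=1$ and $R>1$; note that the clean $KR^{n}$ bound you sketch for $R>1$ would itself need a bootstrap through the $n^{2}$ bound (as in Theorem \ref{main 1}(2)), but only the $Kn^{2}R^{n}$ estimate is required for the Claim.
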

\begin{proof}[Proof of the claim]

Let $P \in X_{f}( \overline{\mathbb Q})$.
Note that $p^{-1}$ is defined at $f^{i}(P)$ for every $i\geq0$.
For $n\geq1$
\begin{alignat}{2}
\label{funddef}
&h_{H}(f^{n}(P))\\
=&(h_{H}\circ g)(p^{-1}f^{n-1}(P))
-\left<A \vec{c}, {\bf h}_{{p}_{*}\vec{ F}}\circ p\right>(p^{-1}f^{n-1}(P))
+\left<A \vec{c}, {\bf h}_{{p}_{*}\vec{F}}\right>(f^{n-1}(P))\notag\displaybreak[1]\\
     \intertext{by (\ref{htE'})(\ref{htE}),}
=& \left<A \vec{c}, {\bf h}_{\vec{F}}-{\bf h}_{{p}_{*}\vec{ F}}\circ p\right>(p^{-1}f^{n-1}(P))
+h_{E}(p^{-1}f^{n-1}(P))
+\left<BA \vec{c}, {\bf h}_{\vec{D}}\right>(f^{n-1}(P)) \notag\displaybreak[0]\\
&+\left<A \vec{c}, {\bf h}_{\vec{{E'}}}\right>(f^{n-1}(P))                    \notag\displaybreak[3]\\
     \intertext{by (\ref{htZ}),}
=& \left<\vec{c}, - {\bf h}_{\vec{Z}}\right>(p^{-1}f^{n-1}(P))
+h_{E}(p^{-1}f^{n-1}(P))
+\left<BA \vec{c}, {\bf h}_{\vec{D}}\right>(f^{n-1}(P)) 
+\left< \vec{c}, {}^{\rm t}A{\bf h}_{\vec{{E'}}}\right>(f^{n-1}(P))  \notag\displaybreak[3]\\
 \intertext{since $h_{Z_{i}}\geq0$ on $Y\setminus {\rm Exc}(p)$,}
\leq & h_{E}(p^{-1}f^{n-1}(P))
+\left<BA \vec{c}, {\bf h}_{\vec{D}}\right>(f^{n-1}(P))
+\left< \vec{c}, {}^{\rm t}A{\bf h}_{\vec{{E'}}}\right>(f^{n-1}(P))  \notag\displaybreak[3]\\
\intertext{by (\ref{form of A})(\ref{rootboundE})(\ref{rootboundE'}),} 
\leq & r^{2}\| \vec{c}\|\|BA\|h_{H}(f^{n-1}(P))
+r\| \vec{c}\|C \sqrt[]{h_{H}(f^{n-1}(P))}
+C \sqrt[]{h_{{H'}}(p^{-1}(f^{n-1}(P)))}     \notag\displaybreak[3]\\
\intertext{by (\ref{leqgamma}) and $h_{p^{*}H-{H'}}\geq0$ on $Y\setminus {\rm Exc}(p)$,}
\leq & r^{2}\| \vec{c}\|\|BA\|h_{H}(f^{n-1}(P))
+r\| \vec{c}\|C \sqrt[]{h_{H}(f^{n-1}(P))}
+C \sqrt[]{h_{H}(f^{n-1}(P))+\gamma}   \notag\displaybreak[0].
\end{alignat}
Note that $C, \gamma$ depend on $f$.
On the other hand, $r, H, D_{1},\dots, D_{r}$, and $h_{H}$ do not depend on $f$.
Thus $ \vec{c}$ also does not depend on $f$.

Since $BA$ is the representation matrix of $f^{*}$ with respect to $D_{1},\dots,D_{r}$, 
$BA=M(f)$ and $R=\max\{ 1, r^{2}\| \vec{c}\|\|BA\|\}$.
Then, dividing the both sides of (\ref{funddef}) by $R^{n}$, we get
\begin{align*}
\frac{h_{H}(f^{n}(P))}{R^{n}}\leq & \frac{h_{H}(f^{n-1}(P))}{R^{n-1}}\\
&+r\| \vec{c}\|C \sqrt[]{\frac{h_{H}(f^{n-1}(P))}{R^{n-1}}}
+C \sqrt[]{\frac{h_{H}(f^{n-1}(P))}{R^{n-1}}+\gamma}\ .
\end{align*}
Let
\[
a_{n}=\frac{ h_{H}(f^{n}(P))}{R^{n}}\ \ \ \text{for $n\geq0$}.
\]
Then $a_{n}>0$ and $a_{0}=h_{H}(P)$ and the sequence $(a_{n})_{n}$ satisfies the following
inequality.
\[
a_{n} \leq a_{n-1}+r\| \vec{c}\|C \sqrt[]{a_{n-1}}+C \sqrt[]{a_{n-1}+\gamma}
\]
By Lemma \ref{seq lem0}, there exist a constant $K>0$ independent of $n, P$ such that
\[
a_{n}\leq Kn^{2}a_{0}\ \ \ \text{for all $n\geq1$}.
\]
Therefore
\begin{align*}
h_{H}(f^{n}(P))&\leq Kn^{2}R^{n}h_{H}(P).
\end{align*}
Thus we get the claim.
 \end{proof}

Now, fix any positive real number $\epsilon>0$.
Let $\delta=\delta_{f}$.
Let $M(f^{k})$ be the representation matrix of $(f^{k})^{*} \colon  N^{1}(X)_{\mathbb R} \longrightarrow N^{1}(X)_{\mathbb R}$
with respect to the basis $D_{1},\dots , D_{r}$.
Since $\lim_{k\to \infty}\|M(f^{k})\|^{1/k}=\delta$,
there exists a positive integer $k>0$ such that
\begin{align}
\label{ep'}
\frac{\|M(f^{k})\|}{(\delta+\epsilon)^{k}}r^{2}\| \vec{c}\| <1.
\end{align}
Fix such a $k$. We apply the claim to $f^{k}$ in the place of $f$.
Then,
\begin{align*}
h_{H}(f^{kn}(P))\leq
Kn^{2}\left(\frac{R}{(\delta+\epsilon)^{k}}\right)^{n}(\delta+\epsilon)^{kn}h_{H}(P).
\end{align*}
Recall $R=\max\{ 1, r^{2}\| \vec{c}\|\|M(f^{k})\|\}$.
Thus, by (\ref{ep'}) 
\[
\frac{R}{(\delta+\epsilon)^{k}}<1.
\]
Thus there exists a constant $K'$ such that
\[
Kn^{2}\left(\frac{R}{(\delta+\epsilon)^{k}}\right)^{n}\leq K'
\]
for all $n$.
Then we get
\[
h_{H}(f^{kn}(P))\leq
K'(\delta+\epsilon)^{kn}h_{H}(P).
\]
By Remark \ref{remark on heights} or the same argument at the end of the proof of Theorem \ref{bound for mor case}(2), this proves
Theorem \ref{FI}(2).

\end{proof}

\begin{Rem}
One can prove Theorem \ref{MI} over any ground field $K$ such that Weil height functions can be defined.
If the characteristic of $K$ is zero, the same proof works.
For the case when the characteristic of $K$ is positive, see Appendix \ref{pos chara}.
\end{Rem}

\section{Picard rank one case}\label{section:picard rank one case}

When the Picard number of $X$ is one, we can say much more about the behavior of the sequence $\{h_{X}(f^{n}(P))\}_{n}$.

\begin{Thm}[Theorem \ref{main 4}]\label{picone}
Let $X$ be a smooth projective variety over $ \overline{\mathbb Q}$
of Picard number one.
Let $f \colon  X\dashrightarrow X$ be a dominant rational self-map defined over $ \overline{\mathbb Q}$.
Fix an ample height function $h_{X}$ on $X$.
\begin{enumerate}
\item
For any positive integer $k>0$, there exists a constant $C>0$ such that
\[
h_{X}^{+}(f^{n}(P))\leq Cn^{2}\rho((f^{k})^{*})^{n/k}h_{X}^{+}(P)
\]
for all $P\in X_{f}(\overline{ {\mathbb{Q}}})$ and $n\geq 1$.
\item
Let $k>0$ be a positive integer.
Assume that $\rho((f^{k})^{*})>1$.
Then there exists a constant $C>0$ such that
\[
h_{X}^{+}(f^{n}(P))\leq C\rho((f^{k})^{*})^{n/k}h_{X}^{+}(P)
\]
for all $P\in X_{f}(\overline{ {\mathbb{Q}}})$ and $n\geq 0$.
\end{enumerate}
\end{Thm}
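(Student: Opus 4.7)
The plan is to specialize the machinery used in the proof of Theorem \ref{FI} to the Picard rank one situation, where $r=1$ and the matrices $A,B$ collapse to scalars. Fix an ample divisor $H$ on $X$ that generates $N^1(X)_{\mathbb R}\cong\mathbb R$, take $D_1=H$ so that $\vec c=(1)$, and run the construction of Section \ref{sec:FI} with $f^k$ in place of $f$: choose a resolution of indeterminacy $p\colon Y\to X$ of $f^k$, and form the auxiliary divisors $F_j,Z_i,E,E_i'$ as there. The product $BA$ appearing in the proof of the Claim in Section \ref{sec:FI} is then the scalar $\mu_k:=\rho((f^k)^*)$, and the fundamental inequality \eqref{funddef} specializes to
\[
h_H(f^{kn}(P)) \leq \mu_k\, h_H(f^{k(n-1)}(P)) + C_1\sqrt{h_H(f^{k(n-1)}(P))} + C_2\sqrt{h_H(f^{k(n-1)}(P))+\gamma},
\]
where the constants $C_1,C_2,\gamma$ depend on $k,p,H$ and the chosen height functions, but not on $n$ or $P$.

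For part (1), I would quote the Claim in the proof of Theorem \ref{FI} directly with $f^k$ in place of $f$. Since $r=1$ and $\mu_k\geq 1$ (because $(f^k)^*$ acts on $N^1(X)=\mathbb Z$ by multiplication by a positive integer), the quantity $R=\max\{1,r^2\|\vec c\|\,\|M(f^k)\|\}$ equals $\mu_k$, and the Claim gives
\[
h_H(f^{kn}(P))\leq Kn^2\mu_k^{\,n}h_H(P).
\]
To pass from multiples of $k$ to all integers $m\geq 1$, write $m=kn+t$ with $0\leq t<k$ and use Lemma \ref{uniform bound lem} to absorb the at most $k-1$ extra iterations into a constant; the inequalities $\mu_k^{\,n}\leq\mu_k^{\,m/k}$ (valid because $\mu_k\geq 1$) and $n^2\leq m^2$, together with the comparison of ample height functions from Remark \ref{remark on heights}, yield the stated bound for $h_X^+$.

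For part (2) the strict inequality $\mu_k>1$ provides enough decay in $\mu_k^{-n}$ to remove the $n^2$ factor. Setting $b_n=h_H(f^{kn}(P))/\mu_k^{\,n}$, the specialized fundamental inequality above rearranges to
\[
b_n \leq b_{n-1} + \frac{C_1\sqrt{b_{n-1}}}{\mu_k^{(n+1)/2}} + \frac{C_2\sqrt{b_{n-1}\mu_k^{\,n-1}+\gamma}}{\mu_k^{\,n}}.
\]
Bounding the last radical by $\sqrt{b_{n-1}}/\mu_k^{(n+1)/2}+\sqrt{\gamma}/\mu_k^{\,n}$ and passing to $c_n=\sqrt{b_n}$, one obtains a recursion of the shape $c_n\leq c_{n-1}+\lambda_n$ with $\sum_n\lambda_n<\infty$, because $\mu_k>1$ makes both $\sum\mu_k^{-(n+1)/2}$ and $\sum\mu_k^{-n/2}$ converge. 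Hence $c_n$, and therefore $b_n$, is bounded by a constant multiple of $h_H(P)$ (using $h_H\geq 1$), giving $h_H(f^{kn}(P))\leq C\mu_k^{\,n}h_H(P)$. The $m=kn+t$ decomposition of part (1) then extends this to all $n\geq 0$.

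The \textbf{main obstacle} I anticipate is the careful bookkeeping of constants when applying the FI machinery to $f^k$ rather than $f$: the quantities $C_1,C_2,\gamma$ and the constant of Lemma \ref{uniform bound lem} all depend on $k$, because the resolution $p$ and the divisors $F_j$ on $Y$ must be chosen for $f^k$, and the auxiliary height functions must be fixed in advance; however, they must remain independent of $n$ and $P$ for the iteration to work. The virtue of Picard rank one is that the single scalar $\mu_k$ already captures the full action of $(f^k)^*$ on $N^1(X)_{\mathbb R}$, so the interplay between $\|M(f^k)\|$ and $(\delta_f+\epsilon)^k$ that drove the general case of Theorem \ref{FI} becomes unnecessary here.
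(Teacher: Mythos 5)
Your proof is correct and follows the same overall strategy as the paper: specialize the apparatus of Theorem \ref{FI} to Picard rank one, reduce to the Claim/Lemma \ref{seq lem0} to get the $n^{2}\rho_{k}^{n}$ bound for iterates $f^{kn}$, then use Lemma \ref{uniform bound lem} and Remark \ref{remark on heights} to fill in the non-multiples of $k$ and pass to $h_{X}^{+}$. You make two local simplifications worth flagging against the paper's version.

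For part (1), you take $D_{1}=H$ so that $\vec{c}=(1)$ and $R=\max\{1,\rho_{k}\}=\rho_{k}$, which lets you read the desired inequality straight off the Claim. The paper instead keeps $D_{1}$ and $H$ distinct (the setup of Theorem \ref{FI} literally requires $H-D_{1}$ ample, which fails when $D_{1}=H$), and corrects the coefficient in front of $h_{H}(f^{k(n-1)}(P))$ by introducing the numerically trivial divisor $N=c_{1}D_{1}-H$ and bounding $h_{N}=c_{1}h_{D_{1}}-h_{H}$ by $\widetilde{C}\sqrt{h_{H}}$. Your shortcut is sound, but you should note explicitly that the hypothesis ``$H\pm D_{1}$ ample'' is only there to ensure $h_{H}\geq |h_{D_{1}}|$, and that when $D_{1}=H$ one simply takes $h_{D_{1}}:=h_{H}$ so this bound is automatic; otherwise ``quote the Claim directly'' is not literally available. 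For part (2), your argument is cleaner than the paper's: the paper first proves part (1) and then uses the crude $n^{2}$ bound to show that the series of error terms $\sum_{n}\sqrt{h_{H}(f^{k(n-1)}(P))}/\rho_{k}^{n}$ converges, whereas you pass to $c_{n}=\sqrt{b_{n}}$ and observe (essentially by completing the square, $c_{n}^{2}\leq(c_{n-1}+A_{n}/2)^{2}+B_{n}$ hence $c_{n}\leq c_{n-1}+A_{n}/2+\sqrt{B_{n}}$) that the increments $\lambda_{n}$ are a summable sequence \emph{independent of $b_{n-1}$}, so part (2) becomes self-contained. It would be worth spelling out that completion-of-square step, since without it the sentence ``one obtains a recursion $c_{n}\leq c_{n-1}+\lambda_{n}$'' could be misread as circular (the naive bound $\sqrt{a+b}\leq\sqrt{a}+\sqrt{b}$ leaves a $\sqrt{c_{n-1}}$ factor hanging around).
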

\begin{proof}
We use the notation in the proof of Theorem \ref{FI}.
For simplicity, we write $\rho_{k}=\rho((f^{k})^{*})$ for $k>0$.
We apply (\ref{funddef}) to $f^{k}$.
By the assumption $r=1$, thus $BA=\rho_{k}$ is a real number.
By (\ref{funddef}),
\begin{align}
\label{piconeexp}
h_{H}(f^{nk}(P))=&-c_{1}h_{Z_{1}}(p^{-1}f^{k(n-1)}(P))+h_{E}(p^{-1}f^{k(n-1)}(P))\\
&+\rho_{k}c_{1}h_{D_{1}}(f^{k(n-1)}(P))+c_{1}h_{{E_{1}'}}(f^{k(n-1)}(P))\notag\\
\leq& \rho_{k}c_{1}h_{D_{1}}(f^{k(n-1)}(P))+C \sqrt[]{ h_{H}(f^{k(n-1)}(P))+\gamma}\notag\\
&+c_{1}C \sqrt[]{ h_{H}(f^{k(n-1)}(P))}\notag
\end{align}
Let $N=c_{1}D_{1}-H$. By the definition of $c_{1}$, this is a numerically zero divisor.
Define
\[
h_{N}=c_{1}h_{D_{1}}-h_{H}.
\]
Then, this is a height function associated with $N$.
Thus there exists a constant $\widetilde{C}>0$ such that
\[
|h_{N}| \leq \widetilde{C} \sqrt[]{h_{H}}.
\]
Then
\begin{align*}
h_{H}(f^{nk}(P))\leq& \rho_{k}h_{H}(f^{k(n-1)}(P))+\widetilde{C} \sqrt[]{ h_{H}(f^{k(n-1)}(P))}\\
&+C \sqrt[]{ h_{H}(f^{k(n-1)}(P))+\gamma}+c_{1}C \sqrt[]{ h_{H}(f^{k(n-1)}(P))}.
\end{align*}
Divide both sides of this inequality by $\rho_{k}^{n}$.
By Lemma \ref{seq lem0}, there exists a constant $\widetilde{K}>0$ (which is independent of $n,P$, but depends on $k$)
such that
\begin{align}
\label{pic one first}
h_{H}(f^{nk}(P))\leq \widetilde{K}n^{2}\rho_{k}^{nk/k}h_{H}(P)\ \ \ \text{for all $n\geq1$}.
\end{align}
By the same argument as in (Proof of Theorem \ref{FI} $\Longrightarrow $Theorem \ref{MI}),
we can prove the first statement.

Now assume $\rho_{k}>1$.
Then
\begin{align*}
\frac{ h_{H}(f^{nk}(P))}{\rho_{k}^{n}}\leq \frac{ h_{H}(f^{k(n-1)}(P))}{\rho_{k}^{n-1}}
+\left(\widetilde{C}+C+c_{1}C\right) \frac{\sqrt[]{ h_{H}(f^{k(n-1)}(P))}}{\rho_{k}^{n}}
+\frac{C \sqrt[]{\gamma}}{\rho_{k}^{n}}
\end{align*}

By (\ref{pic one first}),
\[
\sqrt[]{h_{H}(f^{k(n-1)}(P))} \leq \sqrt[]{ \widetilde{K}h_{H}(P)}(n-1)\rho_{k}^{(n-1)/2}
\]
and thus
\begin{align*}
&\sum_{n=1}^{\infty}\left\{ \left(\widetilde{C}+C+c_{1}C\right) \frac{\sqrt[]{ h_{H}(f^{k(n-1)}(P))}}{\rho_{k}^{n}}
+\frac{C \sqrt[]{\gamma}}{\rho_{k}^{n}} \right\}\\
\leq &\sum_{n=1}^{\infty}\left\{ \left(\widetilde{C}+C+c_{1}C\right) 
\frac{\sqrt[]{ \widetilde{K}h_{H}(P)}(n-1)\rho_{k}^{(n-1)/2}}{\rho_{k}^{n}}
+\frac{C \sqrt[]{\gamma}}{\rho_{k}^{n}} \right\}.
\end{align*}
Since $\rho_{k}>1$, there exists a constant $\widetilde{K}_{1}$ (independent of $n,P$)
such that
\[
\frac{h_{H}(f^{nk}(P))}{\rho_{k}^{n}}\leq \widetilde{K}_{1} h_{H}(P).
\]
Thus
\[
h_{H}(f^{nk}(P)) \leq \widetilde{K}_{1} \rho_{k}^{nk/k}h_{H}(P).
\]
By the same argument as in (Proof of Theorem \ref{FI} $\Longrightarrow $Theorem \ref{MI}),
we can prove the second statement.

\end{proof}

Now, we prove the convergence of canonical heights.

\begin{Prop}[Proposition \ref{main 5}]
Let $X$ and $f$ be as in Theorem \ref{picone}
Assume $f$ is algebraically stable and $\delta_{f}>1$.
Fix an ample height function $h_{X}$ on $X$.
Then
\[
\hat{h}_{X,f}(P)=\lim_{n\to \infty}\frac{h_{X}(f^{n}(P))}{\delta_{f}^{n}}
\]
exists for all $P \in X_{f}( \overline{\mathbb Q})$.
\end{Prop}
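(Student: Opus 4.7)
The plan is to show that the sequence $a_n := h_H(f^n(P))/\delta_f^n$ is Cauchy for a suitable ample height $h_H$, and then transfer the conclusion to the given height $h_X$. The key inputs are the explicit identity (\ref{piconeexp}) from the proof of Theorem \ref{picone} with $k=1$, together with the upper bound of Theorem \ref{main 4}(2).

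Since $X$ has Picard number one and $f$ is algebraically stable, $f^*$ acts on the one-dimensional $N^1(X)_{\mathbb{R}}$ as multiplication by $\delta_f = \rho(f^*)$, so $f^*H \equiv \delta_f H$ for every ample divisor $H$. Choose such an $H$ and adopt the notation from the proof of Theorem \ref{FI}: a resolution $p\colon Y \to X$ of $f$ with $g = f\circ p$, a very ample generator $D_1$ of $N^1(X)_{\mathbb{R}}$, $H \equiv c_1 D_1$ with $c_1 > 0$, the numerically trivial divisors $E$ and $E_1'$, and the effective $p$-exceptional divisor $Z_1$. Setting $h_N := c_1 h_{D_1} - h_H$, which is a height function for the numerically zero divisor $c_1 D_1 - H$, the equality in (\ref{piconeexp}) at $k=1$ rewrites as
\begin{equation*}
h_H(f^n(P)) - \delta_f\, h_H(f^{n-1}(P)) = R_n(P),
\end{equation*}
where
\begin{equation*}
R_n(P) := \delta_f\,h_N(f^{n-1}(P)) + c_1 h_{E_1'}(f^{n-1}(P)) + h_E(p^{-1}f^{n-1}(P)) - c_1 h_{Z_1}(p^{-1}f^{n-1}(P)).
\end{equation*}

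Every summand of $R_n(P)$ is a height associated with a numerically trivial divisor on $X$ or $Y$, so by the standard square-root bound (\cite[Theorem B.5.9]{hs}) combined with the comparison (\ref{leqgamma}) between $h_H\circ p$ and the chosen ample height $h_{H'}$ on $Y$, there exist constants $C_1, \gamma > 0$ independent of $n$ and $P$ such that $|R_n(P)| \leq C_1\sqrt{h_H(f^{n-1}(P)) + \gamma}$. Applying Theorem \ref{main 4}(2) with $k=1$ (available because $\delta_f > 1$) yields $h_H(f^{n-1}(P)) \leq C_2\, \delta_f^{n-1}\, h_H^+(P)$, hence
\begin{equation*}
|a_n - a_{n-1}| = \frac{|R_n(P)|}{\delta_f^n} \leq C_3\, \delta_f^{-(n+1)/2}\sqrt{h_H^+(P)+1},
\end{equation*}
which is summable in $n$ since $\delta_f > 1$. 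Therefore $(a_n)$ is Cauchy and converges. To transfer to $h_X$, note that Picard number one gives $h_X - \lambda h_H$ is a height of a numerically trivial divisor for some $\lambda > 0$; the same square-root bound together with Theorem \ref{main 4}(2) gives $h_X(f^n(P))/\delta_f^n - \lambda\,h_H(f^n(P))/\delta_f^n = O(\delta_f^{-n/2}) \to 0$, so the limit exists for $h_X$ as well.

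The main obstacle will be the careful identification of each summand of $R_n(P)$ as the value of a height associated with a numerically trivial divisor, and in particular the control of the two terms evaluated at $p^{-1}f^{n-1}(P)$ in terms of $h_H(f^{n-1}(P))$ via (\ref{leqgamma}); once these error estimates are in place, the telescoping estimate reduces to a routine geometric-series computation.
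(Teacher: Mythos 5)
There is a genuine gap in your argument. You assert that ``every summand of $R_n(P)$ is a height associated with a numerically trivial divisor on $X$ or $Y$,'' but this is false for the $Z_1$-term: $Z_1 = p^*p_*g^*D_1 - g^*D_1$ is an \emph{effective $p$-exceptional} divisor, and it is not numerically trivial on $Y$ (it lives in the part of $N^1(Y)_{\mathbb R}$ spanned by the exceptional classes). Consequently $h_{Z_1}$ admits no square-root bound. After normalizing, $h_{Z_1}\ge 0$ on $Y\setminus\operatorname{Exc}(p)$, so $-c_1h_{Z_1}(p^{-1}f^{n-1}(P))$ is bounded \emph{above} by $0$, but it is unbounded \emph{below}: as $f^{n-1}(P)$ comes arithmetically close to $I_f$, the point $p^{-1}f^{n-1}(P)$ approaches $\operatorname{Exc}(p)$ and $h_{Z_1}$ can grow like an ample height. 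The best a priori control on this term (via $h_{Z_1}\le C\,h_{H'}$, (\ref{leqgamma}), and Theorem~\ref{main 4}(2)) is $h_{Z_1}(p^{-1}f^{n-1}(P)) = O(\delta_f^{n-1}h_H^+(P))$, which after division by $\delta_f^n$ does not decay, so the claimed bound $|a_n-a_{n-1}|=O(\delta_f^{-(n+1)/2})$ and hence the Cauchy argument collapse.

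The paper's own proof deliberately avoids any two-sided bound. Setting $B=h_H\circ f-\delta_f h_H$, one only knows $B\le O\bigl(\sqrt{h_X^+}\bigr)$ on $X\setminus I_f$ (the defect coming from the exceptional effective divisor has a definite sign). Telescoping gives $h_H(f^n(P))/\delta_f^n = h_H(P) + \sum_{k=0}^{n-1} B(f^k(P))/\delta_f^{k+1}$, and after normalizing $a_k = B(f^k(P))/\sqrt{\delta_f+\epsilon}^{\,k}$ (bounded above, by Theorem~\ref{main}), the sum is split into its nonnegative part (dominated by a geometric series, hence convergent) and its nonpositive part (monotone). This yields the dichotomy ``converges or diverges to $-\infty$'' of Proposition~\ref{main 6}; for an ample height, which is bounded below, divergence to $-\infty$ is impossible, giving Proposition~\ref{main 5}. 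Your proposal would become correct if you retained only the one-sided estimate and replaced the Cauchy criterion by this monotonicity-plus-boundedness argument.
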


\if0

\begin{proof}
For simplicity, we write $\delta=\delta_{f}$.
We use the notation in the proof of Theorem \ref{FI} and Theorem \ref{picone}.
Note that $\delta=\rho_{1}$ since $f$ is assumed algebraically stable.
By (\ref{piconeexp}), for any $P\in X_{f}( \overline{\mathbb Q})$ and $n\geq1$, we have

\begin{align*}
h_{H}(f^{n}(P))=&-\sum_{i=0}^{n-1}\delta^{n-1-i}c_{1}h_{Z_{1}}(p^{-1}(f^{i}(P)))
+\sum_{i=0}^{n-1}\delta^{n-1-i}h_{E}(p^{-1}(f^{i}(P)))\\
&+\sum_{i=0}^{n-1}\delta^{n-1-i}c_{1}h_{E_{1}'}(f^{i}(P))
+\sum_{i=0}^{n-1}\delta^{n-i}h_{N}(f^{i}(P))+\delta^{n}h_{H}(P).
\end{align*}

Dividing by $\delta^{n}$, we get

\begin{align*}
\frac{h_{H}(f^{n}(P))}{\delta^{n}}=&-\sum_{i=0}^{n-1}\delta^{-1-i}c_{1}h_{Z_{1}}(p^{-1}(f^{i}(P)))
+\sum_{i=0}^{n-1}\delta^{-1-i}h_{E}(p^{-1}(f^{i}(P)))\\
&+\sum_{i=0}^{n-1}\delta^{-1-i}c_{1}h_{E_{1}'}(f^{i}(P))
+\sum_{i=0}^{n-1}\delta^{-i}h_{N}(f^{i}(P))+h_{H}(P).
\end{align*}

By Theorem \ref{picone}, when $n\to \infty$, the second, third and fourth terms are convergent
(note that $E, E_{1}', N$ are numerically zero).
By the construction of $c_{1}$ and $h_{Z_{1}}$, $c_{1}h_{Z_{1}}(p^{-1}(f^{k}(P)))\geq0$.
Since $h_{H}(f^{n}(P))\big/\delta^{n}\geq0$, the first term is also convergent.
Hence the limit
\[
\lim_{n\to \infty}\frac{ h_{H}(f^{n}(P))}{\delta^{n}}
\]
exists.
Since the Picard number of $X$ is one, there exist $a>0$ and $C>0$ such that
\[
|h_{X}-ah_{H}|\leq C \sqrt[]{h_{H}}.
\]
By Theorem \ref{picone}(2), $\lim_{n\to \infty} \sqrt[]{h_{H}(f^{n}(P))} \big/ \delta^{n}=0$.
Thus the limit
\[
\lim_{n\to \infty} \frac{ h_{X}(f^{n}(P))}{\delta^{n}}
\]
exists.
\end{proof}

\fi

\begin{proof}
Since any ample heights are bounded below,
this follows from the following more general statement.
\end{proof}

\begin{Prop}
Let $X$ be a smooth projective variety over $ \overline{\mathbb Q}$.
Let $f \colon  X\dashrightarrow X$ be a dominant rational self-map defined over $ \overline{\mathbb Q}$.
Assume $\delta_{f}>1$ and there exists a nef $ {\mathbb{R}}$-divisor $H$ on $X$ such that $f^{*}H \equiv \delta_{f}H$.
Fix a height function $h_{H}$ associated with $H$.
Then for any $P \in X_{f}( \overline{\mathbb Q})$, 
the limit
\[
\lim_{n \to \infty} \frac{h_{H}(f^{n}(P))}{\delta_{f}^{n}}
\]
converges or diverges to $-\infty$.
\end{Prop}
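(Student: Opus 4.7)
The plan is to derive a one-step functional equation for $h_H$ under $f$ in which the error decomposes as a non-positive term plus a numerically trivial correction, and then to telescope. Take a resolution of indeterminacy $p \colon Y \longrightarrow X$ of $f$ as in the proof of Theorem \ref{FI}, with $g = f \circ p$ and $p(\mathrm{Exc}(p)) \subseteq I_f$. Since $g^*H$ is nef and $p^*p_*g^*H - g^*H$ is $p$-exceptional, the negativity lemma produces an effective divisor $Z_H := p^*(f^*H) - g^*H$ supported on $\mathrm{Exc}(p)$. The hypothesis $f^*H \equiv \delta_f H$ then reads
\[
g^*H \equiv \delta_f\, p^*H - Z_H,
\]
so the $\mathbb{R}$-divisor $M := g^*H - \delta_f\, p^*H + Z_H$ is numerically trivial on $Y$.

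Choose height functions $h_H \circ g$ for $g^*H$, $h_H \circ p$ for $p^*H$, and $h_{Z_H}$ for $Z_H$, the last normalized so that $h_{Z_H} \geq 0$ on $Y \setminus \mathrm{Exc}(p)$. Then $h_M := h_H \circ g - \delta_f (h_H \circ p) + h_{Z_H}$ is a height associated with $M$, hence $|h_M| \leq C\sqrt{h_{H'}}$ on $Y(\overline{\mathbb{Q}})$ for a fixed ample height $h_{H'}$ on $Y$. Choosing $H' = -G/N + p^*H$ as in the proof of Theorem \ref{FI} with $h_G \geq 0$ off $\mathrm{Exc}(p)$ ensures $h_{H'} \leq h_H \circ p + O(1)$ on $Y \setminus \mathrm{Exc}(p)$. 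Evaluating at $Q_i := p^{-1}(f^i(P))$ for $P \in X_f(\overline{\mathbb{Q}})$, which is legitimate because $f^i(P) \notin I_f \supseteq p(\mathrm{Exc}(p))$, gives
\[
h_H(f^{i+1}(P)) - \delta_f\, h_H(f^i(P)) = h_M(Q_i) - h_{Z_H}(Q_i)
\]
with $h_{Z_H}(Q_i) \geq 0$. Iterating and dividing by $\delta_f^n$ yields
\[
\frac{h_H(f^n(P))}{\delta_f^n} = h_H(P) + \sum_{i=0}^{n-1} \delta_f^{-1-i}\, h_M(Q_i) - \sum_{i=0}^{n-1} \delta_f^{-1-i}\, h_{Z_H}(Q_i).
\]

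The second sum is non-decreasing in $n$ since each summand is non-negative, so its negative either converges in $\mathbb{R}$ or tends to $-\infty$. For the first sum, fix an ample divisor $A$ on $X$: since $C_0 A \pm H$ are ample for $C_0 \gg 0$, one has $|h_H| \leq C_0\, h_A + O(1)$, so Theorem \ref{MI} applied to $h_A$ gives $h_H(f^i(P)) \leq C_1(\delta_f + \epsilon)^i + O(1)$ for any $\epsilon > 0$. Combined with $h_{H'}(Q_i) \leq h_H(f^i(P)) + O(1)$, this yields $|h_M(Q_i)| \leq C_2(\delta_f + \epsilon)^{i/2}$. Since $\delta_f > 1$, we may pick $\epsilon$ so small that $\delta_f + \epsilon < \delta_f^2$, and then $\sum \delta_f^{-1-i}(\delta_f+\epsilon)^{i/2}$ is a convergent geometric series, establishing absolute convergence of the first sum. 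Therefore $h_H(f^n(P))/\delta_f^n$ either converges in $\mathbb{R}$ or diverges to $-\infty$.

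The main technical burden is the bookkeeping on $Y$: arranging $h_{Z_H} \geq 0$ and $h_G \geq 0$ off the exceptional locus, comparing the ample height on $Y$ with the pullback from $X$, and transporting the numerical hypothesis $f^*H \equiv \delta_f H$ to $Y$ via an explicit effective exceptional correction. Once these are handled exactly as in the proof of Theorem \ref{FI}, the convergence-or-divergence dichotomy falls out immediately from the monotonicity of the $-h_{Z_H}$ sum together with the absolute convergence of the $h_M$ sum.
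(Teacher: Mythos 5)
Your decomposition is essentially the paper's: you telescope the one-step identity $h_H(f^{n}(P))/\delta_f^n = h_H(P) + \sum \delta_f^{-1-k}B(f^k(P))$ and split the error term $B := h_H\circ f - \delta_f h_H$ into a piece with a two-sided $O(\sqrt{\text{ample ht}})$ bound (your $h_M$) plus a sign-definite piece (your $-h_{Z_H}$), then observe that the former gives an absolutely convergent series while the latter gives a monotone one. The paper does the same thing slightly more economically on $X$ itself, quoting \cite[Proposition 21]{ks} for the one-sided inequality $h_H\circ f \leq h_{f^*H} + O(1)$ and then splitting the terms $a_k := B(f^k(P))/\sqrt{\delta_f+\epsilon}^{\,k}$ (bounded above, not necessarily below) according to sign; you make the nonpositive contribution explicit as $-h_{Z_H}(Q_i)$ on the resolution $Y$. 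These are the same argument with different bookkeeping.

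There is, however, one genuine slip in your version. You propose to take $H' = -\tfrac{1}{N}G + p^*H$ as an \emph{ample} divisor on $Y$ and deduce $h_{H'} \leq h_H\circ p + O(1)$ off $\mathrm{Exc}(p)$. But in the present Proposition $H$ is only assumed \emph{nef}, not ample, so $p^*H - \tfrac{1}{N}G$ need not be ample (take $H=0$ to see this fail), and in fact $p^*H - H'$ effective exceptional with $H'$ ample forces $H$ to be big. The claim ``$|h_M|\leq C\sqrt{h_{H'}}$ for a fixed ample height $h_{H'}$'' then cannot be combined with ``$h_{H'}(Q_i)\leq h_H(f^i(P)) + O(1)$'' as written. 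The fix is short and you already have the ingredient in hand: you introduce an ample divisor $A$ on $X$ to bound $h_H$, so instead build $H' = -\tfrac{1}{N}G + p^*A$ (ample for $N\gg 0$ since $A$ is ample and $-G$ is $p$-ample), giving $h_{H'}(Q_i) \leq h_A(f^i(P)) + O(1)$ directly, and then Theorem \ref{MI} applied to $h_A$ yields $|h_M(Q_i)|\leq C_2(\delta_f+\epsilon)^{i/2}$ without the unnecessary detour through $h_H(f^i(P))$. With that replacement the argument is complete and matches the paper's conclusion.
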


\begin{proof}
We take a resolution of indeterminacy $p \colon Y \longrightarrow X$ of $f$ 
so that $p$ is an isomorphism outside the indeterminacy locus $I_{f}$ of $f$:
\[
\xymatrix{
&Y \ar[ld]_{p} \ar[rd]^{g}&\\
X \ar@{-->}[rr]_{f}&&X.
}
\]
Write $g=f\circ p$.
By negativity lemma, $p^{*}p_{*}g^{*}H-g^{*}H$ is a $p$-exceptional effective divisor on $Y$.
Then as in the proof of  \cite[Proposition 21]{ks}, we have
$h_{H}\circ f \leq h_{f^{*}H}+O(1)$ on $X\setminus I_{f}$
where $h_{H}$ and $h_{f^{*}H}$ are height functions associated with $H$ and $f^{*}H$.
Fix an ample height $h_{X}$ on $X$.
Since $f^{*}H\equiv \delta_{f}H$, we have $h_{f^{*}H}-\delta_{f}h_{H}=O\left( \sqrt[]{h_{X}^{+}}\right)$.
Thus, we have
\[
h_{H}\circ f\leq \delta_{f}h_{H}+O\left( \sqrt[]{h_{X}^{+}}\right) \quad \text{on $X\setminus I_{f}$}.
\]
Write $B=h_{H}\circ f-\delta_{f}h_{H}$.
Then, for any $P\in X_{f}$, 
\begin{align*}
h_{H}(f^{n}(x))&=\sum_{k=0}^{n-1}\delta_{f}^{n-1-k}\bigl(h_{H}(f^{k+1}(P))-\delta_{f}h_{H}(f^{k}(P))\bigr)+\delta_{f}^{n}h_{H}(P)\\
&=\sum_{k=0}^{n-1}\delta_{f}^{n-1-k}B(f^{k}(P))+\delta_{f}^{n}h_{H}(P).
\end{align*}
Take $\epsilon>0$ so that $ \sqrt[]{\delta_{f}+\epsilon}<\delta_{f}$.
By Theorem \ref{main}, there exists $C>0$ such that
$B(f^{k}(P))\leq C \sqrt[]{\delta_{f}+\epsilon}^{k}$ for all $k\geq0$.
Set 
\[
a_{k}=\frac{B(f^{k}(P))}{\sqrt[]{\delta_{f}+\epsilon}^{k}}.
\]
Note that $a_{k}$ is bounded above.
Then
\begin{align*}
\frac{h_{H}(f^{n}(P))}{\delta_{f}^{n}}&=h_{H}(P)+\sum_{k=0}^{n-1}\frac{B(f^{k}(P))}{\delta_{f}^{k+1}}\\
&=h_{H}(P)+\frac{1}{\delta_{f}}\sum_{k=0}^{n-1}a_{k}\left(\frac{ \sqrt[]{\delta_{f}+\epsilon}}{\delta_{f}}\right)^{k}\\
&=h_{H}(P)+\frac{1}{\delta_{f}}
\left\{
\sum_{\substack{0\leq k\leq n-1 \\ a_{k}\geq0}}a_{k}\left(\frac{ \sqrt[]{\delta_{f}+\epsilon}}{\delta_{f}}\right)^{k}
-\sum_{\substack{0\leq k\leq n-1 \\ a_{k}<0}}(-a_{k})\left(\frac{ \sqrt[]{\delta_{f}+\epsilon}}{\delta_{f}}\right)^{k}
\right\}.
\end{align*}
The first summation in the bracket
is convergent since $a_{k}$ is bounded above and the second summation is monotonically increasing.
Hence, the claim follows.

\end{proof}

\if0

\begin{proof}
Although the argument is very similar to the proof of Theorem \ref{FI}, we give the proof for the benefit of the reader.

Let $D_{1}=H$ and $D_{2},\dots , D_{r}$ be very ample divisors on $X$ such that $D_{1},\dots ,D_{r}$ form a basis for $ N^{1}(X)_{\mathbb R}$.

We take a resolution of indeterminacy $p \colon Y \longrightarrow X$ of $f$ as follows.
$p$ is a sequence of blowing ups at smooth centers and the images of centers in $X$
are contained in the indeterminacy locus $I_{f}$ of $f$.
Let $g=f\circ p$.
\[
\xymatrix{
&Y \ar[ld]_{p} \ar[rd]^{g}&\\
X \ar@{-->}[rr]_{f}&&X
}
\]
Let $ {\rm Exc}(p)$ be the exceptional locus of $p$.
Then, by Negativity lemma
\[
Z_{i}=p^{*}{p}_{*}g^{*}D_{i}-g^{*}D_{i}
\]
is an effective divisor on $Y$ whose support is contained in $ {\rm Exc}(p)$.
Let $F_{i}=g^{*}D_{i}$ for $i=1,\dots,r$.
Take divisors $F_{r+1},\dots,F_{s}$ on $Y$ so that $F_{1},\dots ,F_{s}$
forms a basis for $ N^{1}(Y)_{\mathbb R}$.

Let
\begin{align}
&g^{*}D_{i} \equiv \sum_{m=1}^{s}a_{mi}F_{m}\ \ (i=1,\dots r)\\
&{p}_{*}F_{j} \equiv \sum_{l=1}^{r}b_{lj}D_{l} \ \ (j=1,\dots, s)
\end{align}
and
\begin{align*}
&A=(a_{mi})_{mi}\ \ \ s\times r\text{-matrix}\\
&B=(b_{lj})_{lj}\ \ r\times s\text{-matrix}.
\end{align*}
By the definition of $F_{j}$, $A$ is the following form.
\begin{align}
A=\left(
\begin{array}{ccc}
1& &\\
&\ddots& \\
&&1\\
&&
\end{array}
\right).
\end{align}
Note that $BA$ is the representation matrix of $f^{*}$
with respect to the basis $D_{1},\dots,D_{r}$.
We write
\[
\vec{D}=\left( 
\begin{array}{c}
D_{1}\\
D_{2}\\
\vdots \\
D_{r}
\end{array}
\right), 
\vec{F}=\left( 
\begin{array}{c}
F_{1}\\
F_{2}\\
\vdots \\
F_{s}
\end{array}
\right),
\vec{e}=\left( 
\begin{array}{c}
1\\
0\\
\vdots \\
0
\end{array}
\right),
\vec{Z}=\left( 
\begin{array}{c}
Z_{1}\\
Z_{2}\\
\vdots \\
Z_{r}
\end{array}
\right).
\]
Since $D_{1}=H$, we have $BA \vec{e}=\delta_{f} \vec{e}$.
Let
\begin{align}
& \vec{{E'}}=\left( 
\begin{array}{c}
E_{1}'\\
E_{2}'\\
\vdots \\
E_{s}'
\end{array}
\right)
={p}_{*} \vec{F}-{}^{\rm t}B \vec{D}.
\end{align}
These are numerically zero divisors.

{\bf The choice of height functions}.

Fix height functions $h_{D_{1}},\dots,h_{D_{r}}$ associated with $D_{1},\dots,D_{r}$.
 
We define $h_{F_{j}}=h_{D_{j}}\circ g,\ j=1,\dots,r$.
These are height functions associated with $F_{j}$.
For $j=r+1,\dots,s$, fix any height functions $h_{F_{j}}$ associated with $F_{j}$.
Fix height functions $h_{p_{*}F_{j}}$ associated with $p_{*}F_{j}$ for $j=1,\dots,s$.
We write
\begin{align*}
&{\bf h}_{\vec{D}}=\left(
\begin{array}{c}
h_{D_{1}}\\
h_{D_{2}}\\
\vdots \\
h_{D_{r}}
\end{array}
\right),\ 
{\bf h}_{\vec{F}}=\left(
\begin{array}{c}
h_{F_{1}}\\
h_{F_{2}}\\
\vdots \\
h_{F_{s}}
\end{array}
\right),\ 
{\bf h}_{{p}_{*}\vec{F}}=\left(
\begin{array}{c}
h_{{p}_{*}F_{1}}\\
h_{{p}_{*}F_{2}}\\
\vdots \\
h_{{p}_{*}F_{s}}
\end{array}
\right)\ .
\end{align*}
Define
\begin{align}
&{\bf h}_{\vec{{E'}}}=
\left(
\begin{array}{c}
h_{E_{1}'}\\
h_{E_{2}'}\\
\vdots \\
h_{E_{s}'}
\end{array}
\right)
={\bf h}_{{p}_{*}\vec{F}}-{}^{\rm t}B {\bf h}_{\vec{D}}\\
& {\bf h}_{\vec{Z}}=
\left(
\begin{array}{c}
h_{Z_{1}}\\
h_{Z_{2}}\\
\vdots \\
h_{Z_{r}}
\end{array}
\right)
=\left(
\begin{array}{c}
h_{{p}_{*}F_{1}}\\
h_{{p}_{*}F_{2}}\\
\vdots \\
h_{{p}_{*}F_{r}}
\end{array}
\right)
\circ p-
\left(
\begin{array}{c}
h_{F_{1}}\\
h_{F_{2}}\\
\vdots \\
h_{F_{r}}
\end{array}
\right)\ .
\end{align}
By the definition of $E'_{j}$ and $Z_{j}$, 
$h_{E_{j}'}$ is a height function associated with $E_{j}'$ for $j=1,\dots, s$
and $h_{Z_{i}}$ is a height function associated with $Z_{i}$ for $i=1,\dots,r$.
By adding a bounded function to $h_{{p}_{*}F_{i}}$, we may assume that $h_{Z_{i}} \geq 0$
on $Y\setminus Z_{i}$.

Let $P \in X_{f}( \overline{\mathbb Q})$.
For $n\geq1$
\begin{alignat}{2}
&h_{H}(f^{n}(P))\notag\displaybreak[3]\\ 
&=(h_{H}\circ g)(p^{-1}f^{n-1}(P))\notag\displaybreak[0]\\
&\ \ \ -\left<A \vec{e}, {\bf h}_{{p}_{*}\vec{ F}}\circ p\right>(p^{-1}f^{n-1}(P))\notag\displaybreak[0]\\
&\ \ \ +\left<A \vec{e}, {\bf h}_{{p}_{*}\vec{F}}\right>(f^{n-1}(P))\notag\displaybreak[1]\\
&= \left<A \vec{e}, {\bf h}_{\vec{F}}-{\bf h}_{{p}_{*}\vec{ F}}\circ p\right>(p^{-1}f^{n-1}(P))\notag\displaybreak[0]\\
&\ \ \ +\left<A \vec{e},{}^{\rm t}B {\bf h}_{\vec{D}}+ {\bf h}_{\vec{E'}}\right>(f^{n-1}(P))\notag\\
&= \left<A\vec{e}, - {\bf h}_{\vec{Z}}\right>(p^{-1}f^{n-1}(P))\notag\displaybreak[0]\\
&\ \ \ +\left<BA \vec{e}, {\bf h}_{\vec{D}}\right>(f^{n-1}(P))+\left<A \vec{e}, {\bf h}_{\vec{E'}}\right>(f^{n-1}(P))\notag\\
&=-h_{Z_{1}}(p^{-1}f^{n-1}(P))+\delta_{f}h_{H}(f^{n-1}(P))+h_{E'_{1}}(f^{n-1}(P)).\notag
\end{alignat}
 
 Thus we get
 
 \begin{align*}
 h_{H}(f^{n}(P))=\delta_{f}^{n}h_{H}(P)+ \sum_{k=0}^{n-1}\delta_{f}^{n-1-k}h_{E'_{1}}(f^{k}(P))-\sum_{k=0}^{n-1}\delta_{f}^{n-1-k}h_{Z_{1}}(p^{-1}f^{k}(P)).
 \end{align*}
 
 Dividing by $\delta_{f}^{n}$, we get
 \begin{align*}
 \frac{h_{H}(f^{n}(P))}{\delta_{f}^{n}}=h_{H}(P)+\sum_{k=0}^{n-1}\frac{h_{E'_{1}}(f^{k}(P))}{\delta_{f}^{k+1}}-\sum_{k=0}^{n-1}\frac{h_{Z_{1}}(p^{-1}f^{k}(P))}{\delta_{f}^{k+1}}.
 \end{align*}
 Since $E'_{1}$ is numerically zero, the limit $\lim_{n\to \infty}\sum_{k=0}^{n-1}h_{E'_{1}}(f^{k}(P))/\delta_{f}^{k+1}$ exists.
 By the choice of $h_{Z_{1}}$, $h_{Z_{1}}(p^{-1}f^{k}(P))\geq0$ and the sequence $(\sum_{k=0}^{n-1}h_{Z_{1}}(p^{-1}f^{k}(P))/\delta_{f}^{k+1})_{n}$
 is weakly increasing.
 By the assumption, the left hand side is bounded below and therefore the limit $\lim_{n\to \infty}\sum_{k=0}^{n-1}h_{Z_{1}}(p^{-1}f^{k}(P))/\delta_{f}^{k+1}$
 exists.
 \end{proof}

\fi

\appendix
\section{lemmas}

\begin{Lem}\label{seq lem0}
Let $(a_{n})_{n\geq0}$ be a sequence of  positive real numbers with $a_{0}\geq1$
which satisfies
\[
a_{n}\leq a_{n-1}+C_{1}\left( \sqrt[]{a_{n-1}}+ \sqrt[]{a_{n-1}+C_{2}}\right)
\]
for all $n\geq1$.
Here $C_{1}, C_{2}$ are non-negative constants.
Then there exists a positive constant $\widetilde{C}$ depending only on $C_{1},C_{2}$ such that
\[
a_{n}\leq \widetilde{C}n^{2}a_{0}
\]
for all $n\geq1$.
\end{Lem}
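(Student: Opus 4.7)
The plan is to linearize the recursion by substituting $b_n = \sqrt{a_n}$. With this substitution the hypothesis reads
\[
b_n^2 \leq b_{n-1}^2 + C_1 b_{n-1} + C_1 \sqrt{b_{n-1}^2 + C_2}.
\]
First I would apply the elementary inequality $\sqrt{x^2+y} \leq x + \sqrt{y}$, valid for $x,y\geq 0$, to the square root on the right. This replaces $\sqrt{b_{n-1}^2+C_2}$ by $b_{n-1} + \sqrt{C_2}$ and yields
\[
b_n^2 \leq b_{n-1}^2 + 2 C_1 b_{n-1} + C_1 \sqrt{C_2}.
\]

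Next I would complete the square, writing the right-hand side as $(b_{n-1}+C_1)^2 + (C_1\sqrt{C_2} - C_1^2)$, and apply the same inequality once more to obtain a telescoping bound $b_n \leq b_{n-1} + C_3$, where $C_3$ depends only on $C_1$ and $C_2$; for instance $C_3 = C_1 + \sqrt{C_1\sqrt{C_2}}$ works in all cases (when $C_1\sqrt{C_2}\leq C_1^2$ the smaller value $C_3 = C_1$ already suffices). Iterating gives $b_n \leq b_0 + n C_3$, so squaring and using $(x+y)^2 \leq 2x^2+2y^2$,
\[
a_n = b_n^2 \leq (b_0 + n C_3)^2 \leq 2 a_0 + 2 C_3^2 n^2.
\]

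Finally, the hypothesis $a_0 \geq 1$ is exactly what is needed to convert this additive bound into the multiplicative form demanded by the conclusion: for $n\geq 1$ and $a_0\geq 1$ we have $2 a_0 \leq 2 n^2 a_0$ and $2 C_3^2 n^2 \leq 2 C_3^2 n^2 a_0$, so $\widetilde{C} = 2 + 2 C_3^2$ does the job. There is no real obstacle in the argument; the only point to be watchful about is that the telescoping constant $C_3$ depends only on $C_1$ and $C_2$ and not on $a_0$ or $n$, which is what makes the final estimate uniform.
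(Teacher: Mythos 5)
Your proof is correct, and it takes a genuinely different route from the paper's. The paper introduces an auxiliary monotone dominating sequence $(b_n)$ with equality in the recurrence, uses the fact that $b_{n-1}\geq a_0\geq 1$ to absorb $C_2$ via the bound $\sqrt{1+C_2/b_{n-1}}\leq\sqrt{1+C_2}$, reduces to a simpler dominating sequence $c_n=c_{n-1}+C_3\sqrt{c_{n-1}}$, and then proves $c_n\leq\widetilde{C}n^2 c_0$ directly by induction on $n$. You instead linearize the recurrence: setting $b_n=\sqrt{a_n}$, applying $\sqrt{x^2+y}\leq x+\sqrt{y}$ and completing the square gives the genuinely linear bound $b_n\leq b_{n-1}+C_3$, which telescopes to a closed-form quadratic estimate $(\sqrt{a_0}+nC_3)^2$ without any induction. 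Your use of the hypothesis $a_0\geq1$ is also different in kind: the paper needs it early, to control the $C_2/b_{n-1}$ term inside the square root, whereas you only invoke it at the very end to convert the additive bound $2a_0+2C_3^2n^2$ into the required multiplicative form $\widetilde{C}n^2a_0$. Your argument is somewhat more self-contained and explicit; the paper's has the minor advantage of never leaving the $a$-scale. Both give a constant depending only on $C_1$ and $C_2$, which is all that is required.
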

\begin{proof}
Define a sequence $(b_{n})_{n\geq0}$ as follows.

\begin{align*}
&b_{0}=a_{0}\\
&b_{n}=b_{n-1}+C_{1}\left( \sqrt[]{b_{n-1}}+ \sqrt[]{b_{n-1}+C_{2}}\right)\ \ \ \ \text{for $n\geq 1$}.
\end{align*}
Then we have $a_{n}\leq b_{n}$.
By the definition, $(b_{n})_{n\geq0}$ is monotonically increasing.
In particular, $b_{n}\geq1$.
Thus
\[
b_{n}=b_{n-1}+C_{1} \sqrt[]{b_{n-1}}\left(1+ \sqrt[]{1+\frac{C_{2}}{b_{n-1}}}\right)
\leq b_{n-1}+C_{1}\left(1+ \sqrt[]{1+C_{2}}\right) \sqrt[]{b_{n-1}}.
\]
Let $C_{3}=C_{1}\left(1+ \sqrt[]{1+C_{2}}\right)$.

Define a sequence $(c_{n})_{n\geq0}$ as follows.

\begin{align*}
&c_{0}=b_{0}\\
&c_{n}=c_{n-1}+C_{3} \sqrt[]{c_{n-1}}\ \ \ \ \text{for $n\geq 1$}.
\end{align*}
Then we have $b_{n}\leq c_{n}$.
We take $\widetilde{C}$ so that 
\[
\widetilde{C}\geq \max\left\{\frac{{C_{3}}^{2}}{4}, 1+C_{3}\right\}.
\]
It is enough to show that $c_{n}\leq \widetilde{C}n^{2}c_{0}$ for $n\geq1$.
We prove this inequality by induction on $n$.
For $n=1$
\[
c_{1}=c_{0}+C_{3} \sqrt[]{c_{0}}\leq (1+C_{3})c_{0} \leq \widetilde{C}c_{0}.
\]
Assume $c_{n}\leq \widetilde{C}n^{2}c_{0}$.
Then
\begin{align*}
c_{n+1}&\leq \widetilde{C}n^{2}c_{0}+C_{3} \sqrt[]{\widetilde{C}n^{2}c_{0}}\\
&\leq \widetilde{C}n^{2}c_{0}+2 \sqrt[]{\widetilde{C}} \sqrt[]{\widetilde{C}n^{2}c_{0}}\\
&=\widetilde{C}\left(n^{2}+\frac{2n}{ \sqrt[]{c_{0}}}\right)c_{0}\\
&\leq \widetilde{C}(n^{2}+2n)c_{0}\\
&\leq \widetilde{C}(n+1)^{2}c_{0}.
\end{align*}

\end{proof}

\begin{Lem}\label{seq lem}
Let $(a_{n})_{n\geq 0}$ be a positive real sequence with $a_{0}\geq1$ which satisfies
\[
a_{n}\leq C(a_{0}+\sqrt[]{a_{0}}+\sqrt[]{a_{1}}+\cdots +\sqrt[]{a_{n-1}})\ \ \ \text{for all $n\geq1$}
\]
where $C$ is a positive constant.
For any $\widetilde{C}\geq1$ such that $\widetilde{C}\geq \max\{\frac{{C}^{2}}{4}, 1+C\}$,
we have 
\[
a_{n} \leq \widetilde{C}n^{2}a_{0}\  \ \ \ \text{for all $n\geq1$}.
\]
\end{Lem}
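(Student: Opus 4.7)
The plan is to prove the bound $a_{n}\leq \widetilde{C}n^{2}a_{0}$ by induction on $n$, mirroring the structure of the proof of Lemma \ref{seq lem0}. The key observation is that once the induction hypothesis is in hand for indices $1,\dots,n-1$, substituting $\sqrt[]{a_{k}}\leq \sqrt[]{\widetilde{C}}\,k\sqrt[]{a_{0}}$ into the defining inequality collapses the tail of the right-hand side via the arithmetic series $\sum_{k=1}^{n-1}k=n(n-1)/2$, which is exactly what produces the $n^{2}$ growth on the right.

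For the base case $n=1$ I would apply the hypothesis directly: $a_{1}\leq C(a_{0}+ \sqrt[]{a_{0}})$, and the bound $a_{0}\geq1$ gives $\sqrt[]{a_{0}}\leq a_{0}$, so $a_{1}\leq 2Ca_{0}$, which is absorbed into $\widetilde{C}a_{0}$ using the assumed lower bound on $\widetilde{C}$.

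For the inductive step at $n\geq 2$, I would feed the induction hypothesis into the right-hand side to obtain
\[
a_{n}\leq Ca_{0}+C \sqrt[]{a_{0}}\Bigl(1+ \sqrt[]{\widetilde{C}}\cdot \tfrac{n(n-1)}{2}\Bigr),
\]
and then use $\sqrt[]{a_{0}}\leq a_{0}$ again to get $a_{n}\leq Ca_{0}\bigl(2+ \sqrt[]{\widetilde{C}}\,n(n-1)/2\bigr)$. At this point the condition $\widetilde{C}\geq C^{2}/4$, equivalent to $C\leq 2 \sqrt[]{\widetilde{C}}$, absorbs the factor $C \sqrt[]{\widetilde{C}}/2$ into $\widetilde{C}$, giving
\[
a_{n}\leq 2Ca_{0}+\widetilde{C}n(n-1)a_{0}.
\]
The remaining task is to show $2Ca_{0}\leq \widetilde{C}na_{0}$ for $n\geq 2$, which follows from $\widetilde{C}\geq 1+C$ via $\widetilde{C}n\geq 2\widetilde{C}\geq 2+2C\geq 2C$, and the induction then closes to $a_{n}\leq \widetilde{C}n^{2}a_{0}$.

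The main obstacle is the careful constant bookkeeping: the two conditions $\widetilde{C}\geq C^{2}/4$ and $\widetilde{C}\geq 1+C$ play complementary roles---the first neutralises the $C \sqrt[]{\widetilde{C}}$ coefficient produced by the arithmetic series in the induction step, while the second handles the lower-order residue from the initial $\sqrt[]{a_{0}}$ term (and effectively the base case). Lining these up so that the same $\widetilde{C}$ works simultaneously for $n=1$ and $n\geq 2$ is essentially the only nontrivial piece of the argument; no auxiliary comparison sequence is needed because the defining inequality already has the shape required for a direct induction.
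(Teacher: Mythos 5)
Your inductive step is correct and is a genuinely different route from the paper's: you run a direct induction on $a_n\leq\widetilde{C}n^2 a_0$, feeding the hypothesis for indices $1,\dots,n-1$ into the defining sum and letting the arithmetic series $\sum_{k=1}^{n-1}k=n(n-1)/2$ produce the $n^2$ growth, whereas the paper first constructs an auxiliary majorant $b_n=C(b_0+\sqrt{b_0}+\cdots+\sqrt{b_{n-1}})$, observes $b_{n+1}=b_n+C\sqrt{b_n}$ (for $n\geq 1$), and hands the problem off to the recursion already treated in the proof of Lemma~\ref{seq lem0}. Your approach avoids the auxiliary sequence entirely, and the role you assign to each constraint on $\widetilde{C}$ — $\widetilde{C}\geq C^2/4$ absorbing the coefficient $C\sqrt{\widetilde{C}}/2$ of $n(n-1)$, and $\widetilde{C}\geq 1+C$ absorbing the leftover $2Ca_0$ once $n\geq 2$ — is exactly right.

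The gap is in the base case. You assert $a_1\leq 2Ca_0\leq\widetilde{C}a_0$ ``using the assumed lower bound on $\widetilde{C}$,'' but $\widetilde{C}\geq\max\{1,\,C^2/4,\,1+C\}$ does not force $\widetilde{C}\geq 2C$ when $1<C<8$: for instance $C=4$ permits $\widetilde{C}=5$, while $a_0=1$, $a_1=8$ is allowed by the hypothesis and violates $a_1\leq\widetilde{C}a_0$. So the inequality you need at $n=1$ is simply not available under the stated constraints, and in fact the lemma as printed is slightly false at $n=1$ in that range of $C$. To be fair, this is a defect of the lemma's stated constant rather than of your strategy — the paper's own reduction has the same problem, since $b_1=Cb_0+C\sqrt{b_0}$ rather than $b_0+C\sqrt{b_0}$, so the base case of the induction in Lemma~\ref{seq lem0} does not transfer either. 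Strengthening the hypothesis to $\widetilde{C}\geq\max\{C^2/4,\,1+C,\,2C\}$ closes your base case with no other changes, and is harmless for the applications in Theorem~\ref{bound for mor case}, which only use that \emph{some} such constant exists.
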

\begin{proof}
Let $(b_{n})_{n\geq 0}$ be a sequence such that 
\begin{align*}
&b_{0}=a_{0}\\
&b_{n}=C\left( b_{0}+ \sqrt[]{b_{0}}+\cdots + \sqrt[]{b_{n-1}}\right)\ \ \ \ \text{for all $n\geq1$}.
\end{align*}
Then clearly $a_{n}\leq b_{n}$ for all $n\geq0$.
By the definition of $b_{n}$, we have $b_{n+1}=b_{n}+C \sqrt[]{b_{n}}$.
Thus the statement follows from Lemma \ref{seq lem0} and its proof.
\end{proof}

\section{Positive characteristic}\label{pos chara}

In this section, we briefly remark how to modify the proof of Theorem \ref{FI} when the ground field has positive characteristic.
Let $K$ be an algebraically closed field with height function (e.g. $ \overline{ {\mathbb{F}}_{q}(t)}$ the algebraic closure of the function field over a finite field).

\begin{Prop}
Let $f \colon X \dashrightarrow Z$ be a dominant rational map of smooth projective varieties over $K$.
\begin{enumerate}
\item Let $Y$ be a projective variety with a birational morphism $p \colon Y \longrightarrow X$ and a 
morphism $g \colon Y \longrightarrow Z$ such that $f \circ p=g$.
For a Cartier divisor $D$ on $Z$, we define $f^{*}D=p_{*}[g^{*}D]$.
Here $[g^{*}D]$ is the codimension one cycle associated with the Cartier divisor $g^{*}D$. 
Then, the divisor $f^{*}D$ is independent of the choice of $Y$.
\item Let $\Gamma \subset X \times Z$ be the graph of $f$. For a Cartier divisor $D$ on $Z$, we have
$f^{*}D={{\rm pr}_{1}}_{*}({\rm pr}_{2}^{*}D \cdot \Gamma)$.
\item The map $f^{*}$ induces a homomorphism $f^{*} \colon N^{1}(Z) \longrightarrow N^{1}(X)$.
This definition of pull-back coincides the definition in \cite{dang, tru}.
\end{enumerate}
\end{Prop}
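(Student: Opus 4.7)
The plan is to reduce each of the three parts to the \emph{projection formula for proper birational morphisms}: if $q \colon Y' \to Y$ is a proper birational morphism between integral projective varieties and $E$ is a Cartier divisor on $Y$, then $q_{*}[q^{*}E]=[E]$ as codimension-one cycles on $Y$. This is a local computation at each prime divisor of $Y$: outside a codimension-$\geq 2$ subset, $q$ is an isomorphism, so the strict transform on $Y'$ of any prime divisor on $Y$ appears in $[q^{*}E]$ with the same multiplicity, while any $q$-exceptional components of $[q^{*}E]$ are annihilated by $q_{*}$.

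For (1), given two choices $(Y_{1},p_{1},g_{1})$ and $(Y_{2},p_{2},g_{2})$, I would construct a common dominating model $Y_{3}$ as (the normalization of) the closure in $Y_{1}\times Y_{2}$ of the image of the open subset $U\subset X$ on which both $p_{i}$ are isomorphisms. The projections $q_{i}\colon Y_{3}\to Y_{i}$ are birational, and the identities $p_{1}q_{1}=p_{2}q_{2}$ and $g_{1}q_{1}=g_{2}q_{2}$ hold on a dense open subset of $Y_{3}$, hence everywhere since $X$ and $Z$ are separated. Applying the key lemma to $E_{i}=g_{i}^{*}D$ yields $(p_{i})_{*}[g_{i}^{*}D]=(p_{i}q_{i})_{*}[(g_{i}q_{i})^{*}D]$, which is the same for $i=1,2$.

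Part (2) follows by taking $Y=\Gamma$ in (1), where $\Gamma\subset X\times Z$ is the closure of the graph of $f$. The projection $\pi=\mathrm{pr}_{1}|_{\Gamma}\colon\Gamma\to X$ is birational since $f$ is defined on an open dense subset of $X$, and $g=\mathrm{pr}_{2}|_{\Gamma}$ satisfies $f\circ\pi=g$; thus $f^{*}D=\pi_{*}[g^{*}D]$, and the cycle $[g^{*}D]$ on $\Gamma$ is by definition the Fulton intersection $\mathrm{pr}_{2}^{*}D\cdot[\Gamma]$. For (3), the projection formula for intersection numbers gives, for any curve $C\subset X$,
$(f^{*}D)\cdot C=p_{*}[g^{*}D]\cdot C=[g^{*}D]\cdot p^{*}C=g^{*}D\cdot p^{*}C=D\cdot g_{*}(p^{*}C)$,
where $p^{*}C$ is available through the refined Gysin pullback because $X$ is smooth. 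The right-hand side depends only on the numerical class of $D$ on $Z$, so $f^{*}$ descends to a homomorphism $N^{1}(Z)\to N^{1}(X)$; the agreement with the definition of \cite{dang, tru} is precisely (2).

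The main technical obstacle is the possible non-normality of the intermediate varieties $Y_{i}$ and $\Gamma$ in positive characteristic, where resolution of singularities is unavailable. The codimension-one cycle $[g^{*}D]$ and its pushforward are cleanest when the local rings at codimension-one points are DVRs, so one normalizes $Y_{3}$ (and $\Gamma$ if needed) before invoking the key lemma; all subsequent manipulations use standard Fulton intersection theory with the smooth target $X$ carrying the numerical equivalence.
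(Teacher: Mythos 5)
The paper states this Proposition in Appendix~\ref{pos chara} without proof, treating it as a recollection of standard material on pull-back by rational maps (with references to \cite{dang, tru}), so there is no proof in the paper against which to compare your argument; I therefore evaluate it on its own terms.

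Your overall strategy is sound and matches the standard treatment: reduce part (1) to the identity $q_{*}[q^{*}E]=[E]$ for a proper birational morphism $q\colon Y'\to Y$ and a Cartier divisor $E$ on $Y$, obtain (2) by specializing to the graph closure, and obtain (3) by moving the intersection number $f^{*}D\cdot C$ across $g$ via the projection formula. The construction of the common dominating model $Y_{3}$ in (1), with equality of compositions forced by density and separatedness, is correct; the identification $j_{*}[g^{*}D]=\mathrm{pr}_{2}^{*}D\cdot[\Gamma]$ in (2) is Fulton's definition of the Cartier-divisor action restricted to the subvariety $\Gamma$; and the chain of equalities in (3) does show that $f^{*}D\cdot C$ depends only on the numerical class of $D$, hence $f^{*}$ descends to $N^{1}$.

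Two points deserve tightening. First, the heuristic justification you give for the key identity (``outside a codimension-$\geq 2$ subset, $q$ is an isomorphism, so strict transforms carry the same multiplicity'') is only valid when the \emph{target} $Y$ is normal; in your application $Y'=Y_{3}$ is normalized but $Y=Y_{i}$ need not be, and then $q$ can fail to be an isomorphism over codimension-one points (e.g., a normalization of a curve with a non-normal point). The identity is nonetheless true in this generality, but the correct argument is precisely Fulton's projection formula applied with $[Y']$: $q_{*}[q^{*}E]=q_{*}(q^{*}E\cap[Y'])=E\cap q_{*}[Y']=E\cap[Y]=[E]$, which you invoke by name but do not use to prove the lemma. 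Second, in (3) the step $p_{*}[g^{*}D]\cdot C=[g^{*}D]\cdot p^{*}C$ needs the refined Gysin/bivariant projection formula over the smooth base $X$, which you correctly flag; a cleaner route, and the one implicit in the $\Gamma$-formula of (2), is to carry out the whole computation inside the smooth ambient $X\times Z$: write $f^{*}D\cdot C=\deg\bigl(\mathrm{pr}_{2}^{*}D\cdot[\Gamma]\cdot\mathrm{pr}_{1}^{*}C\bigr)=\deg\bigl(D\cdot\mathrm{pr}_{2*}([\Gamma]\cdot\mathrm{pr}_{1}^{*}C)\bigr)$, which manifestly depends only on $D$ up to numerical equivalence and avoids the Gysin pull-back on the possibly singular $Y$. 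These are matters of exposition rather than substance; the proof stands.
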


For a dominant rational self-map $f \colon X \dashrightarrow X$, let $p \colon Y \longrightarrow X$
be a blow-up of $X$ with a suitable ideal sheaf $ \mathcal{I}$ whose support is the indeterminacy locus $I_{f}$.
More precisely, take an embedding $i \colon X \longrightarrow {\mathbb{P}}^{N}$.
Then the linear system defining the morphism $i\circ f \colon X\setminus I_{f} \longrightarrow {\mathbb{P}}^{N}$
is uniquely extended to a linear system on $X$.
Then we can take $ \mathcal{I}$ to be the base ideal of this linear system.
Then there exists a surjective morphism $g \colon Y \longrightarrow X$ such that $g= f\circ p$.
Using this setting, we can argue as in the proof of Theorem \ref{FI}.

The only non-trivial point is the following.
In the proof, we need to bound height functions associated with numerically zero divisors.
Precisely, we need the inequality (\ref{rootboundE}).
On a smooth projective variety, this is well-known (see for example \cite{hs}).
Now we need this inequality on $Y$, which is possibly singular.
Actually, this inequality holds on any projective variety.

\begin{Lem}[see for example {\cite[Theorem 9.5.4]{kl}}]
Let $Y$  be a normal projective variety over an algebraically closed field.
Then there exists a morphism $\alpha \colon Y \longrightarrow A$ with $A$ is an Abelian variety with the following property.
For any line bundle $L$ on $Y$ which is algebraically equivalent to zero, there exists a line bundle $M$ on $A$
which is algebraically equivalent to zero such that $L \simeq \alpha^{*}M$. 
\end{Lem}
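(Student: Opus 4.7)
The plan is to construct $\alpha$ as the Albanese morphism of $Y$. For a normal projective variety over an algebraically closed field, the identity component $ \mathrm{Pic}^{0}_{Y}$ of the Picard scheme is representable (Grothendieck), and its underlying reduced scheme $P:=( \mathrm{Pic}^{0}_{Y})_{ \mathrm{red}}$ is a smooth proper commutative group scheme, hence an abelian variety. Define $A:=P^{\vee}$ to be the dual abelian variety. After fixing a base point $y_{0}\in Y$, construct $\alpha \colon Y \longrightarrow A$ from the Poincar\'e line bundle $ \mathcal{P}$ on $Y \times P$, normalized so that its restriction to $\{y_{0}\}\times P$ is trivial: $ \mathcal{P}$ exhibits a family of degree-zero line bundles on $P$ parameterized by $Y$, and by the defining universal property of the dual $A=P^{\vee}$ this family corresponds to a unique morphism $\alpha\colon Y \longrightarrow A$ sending $y_{0}$ to $0$.

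The key step is to verify that $\alpha^{*}\colon \mathrm{Pic}^{0}(A) \longrightarrow \mathrm{Pic}^{0}(Y)$ is surjective on closed points. Under the biduality isomorphism $ \mathrm{Pic}^{0}(A)= \mathrm{Pic}^{0}(P^{\vee})\simeq P$, the pullback $\alpha^{*}$ is identified with the tautological inclusion $P\hookrightarrow \mathrm{Pic}^{0}(Y)$ that sends a closed point $[M]\in P$ to the isomorphism class of $ \mathcal{P}|_{Y\times \{[M]\}}$. Over an algebraically closed field, every line bundle algebraically equivalent to zero on $Y$ represents a closed point of $ \mathrm{Pic}^{0}_{Y}$ and hence of its reduced subscheme $P$. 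So for each such $L$, the class $[L]\in P$ comes from some $[M]\in \mathrm{Pic}^{0}(A)$ with $\alpha^{*}[M]=[L]$, i.e.\ $\alpha^{*}M\simeq L$, and $M$ is by construction algebraically equivalent to zero on $A$.

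The main obstacle is the usual technical subtlety in positive characteristic that $ \mathrm{Pic}^{0}_{Y}$ need not be reduced; this is handled by always passing to $( \mathrm{Pic}^{0}_{Y})_{ \mathrm{red}}$ and using that, on an algebraically closed base field, a group scheme and its reduction have the same closed points. For $Y$ normal, no further pathology arises, and the construction above is exactly the content of the cited Kleiman reference, whose statement we quote directly.
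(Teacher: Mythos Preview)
The paper does not give its own proof of this lemma; it is stated with a direct citation to \cite[Theorem 9.5.4]{kl} and used as a black box. Your proposal therefore cannot be compared to a proof in the paper --- you have supplied what the paper deliberately outsources to Kleiman.

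That said, your outline is the standard construction and is essentially what the cited reference does: build the Albanese variety as the dual of $(\mathrm{Pic}^{0}_{Y})_{\mathrm{red}}$, produce $\alpha$ from the normalized Poincar\'e bundle via the universal property of the dual abelian variety, and then invoke biduality to identify $\alpha^{*}$ on $\mathrm{Pic}^{0}$ with the tautological map. The handling of non-reducedness in positive characteristic by passing to the reduced subscheme (harmless on closed points over an algebraically closed field) is exactly the right manoeuvre. One small point worth tightening: you should say explicitly why $\mathcal{P}|_{\{y\}\times P}$ lands in $\mathrm{Pic}^{0}(P)$ for every $y$ --- this follows because $Y$ is connected and the restriction at $y_{0}$ is trivial, so all fibers lie in the same connected component of $\mathrm{Pic}_{P}$. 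With that clarified, the argument is correct and matches the cited source.
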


By this lemma and the argument in the proof of \cite[Theorem B.5.9]{hs}, we can easily prove the following.

\begin{Prop}\label{root bound on proj var}
Let $Y$ be a projective variety over $K$ and $E, H$ divisors on $Y$
with $E$ numerically equivalent to zero and $H$ ample.
Fix height functions $h_{E}, h_{H}$ associated with these divisors with $h_{H}\geq 1$.
Then there exists a positive constant $C>0$ such that
\[
|h_{E}| \leq C \sqrt{h_{H}}
\]
on $Y(K)$.
\end{Prop}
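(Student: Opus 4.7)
The plan is to mimic the classical proof of \cite[Theorem B.5.9]{hs} valid for smooth projective varieties, replacing the role of the Picard variety by the morphism $\alpha \colon Y \to A$ to an abelian variety supplied by the preceding lemma. I would begin with two reductions. First, by linearity of the height machine, write $E = \sum_i a_i E_i$ as a real combination of integral Cartier divisors $E_i$ that are themselves numerically equivalent to zero (this is possible because the kernel of $\mathrm{Div}(Y)_{\mathbb{R}} \to N^1(Y)_{\mathbb{R}}$ is spanned by integral numerically trivial divisors). Thus it suffices to prove the inequality for an integral $E \equiv 0$. Second, the classical Neron--Severi / Matsusaka theorem asserts that numerical and algebraic equivalence of integral Cartier divisors agree modulo torsion, so after replacing $E$ by a suitable positive multiple we may assume that $E$ is algebraically equivalent to zero.

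With these reductions in place, the lemma yields an algebraically trivial divisor $M$ on $A$ with $\mathcal{O}_Y(E) \simeq \alpha^{*}\mathcal{O}_A(M)$, so functoriality of heights gives $h_E = h_M \circ \alpha + O(1)$. On the abelian variety $A$, a standard fact (following from the decomposition of line bundles into symmetric and antisymmetric parts and the quadratic transformation law for heights associated to symmetric ample divisors) produces, for any symmetric ample $H_A$ on $A$, a constant $C'$ with $|h_M(Q)| \leq C'\sqrt{h_{H_A}(Q)}$ for all $Q \in A(K)$. Since $H$ is ample on $Y$, some multiple $nH - \alpha^{*}H_A$ is effective, hence $h_{\alpha^{*}H_A} = h_{H_A}\circ\alpha + O(1) \leq n\, h_H + O(1)$. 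Combining these estimates and absorbing the bounded error terms using $h_H \geq 1$ yields $|h_E| \leq C\sqrt{h_H}$ on $Y(K)$.

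The main obstacle is ensuring that the preceding lemma applies to $Y$: it is stated for normal projective varieties, whereas the proposition only assumes $Y$ is projective. If $Y$ is not normal, I would first replace it by its normalization $\widetilde{Y} \to Y$, noting that this finite morphism identifies heights and preserves numerical equivalence and ampleness up to bounded error, so that the statement on $Y$ follows from the one on $\widetilde{Y}$. Beyond this book-keeping and the (fiddly but routine) passage from real $\mathbb{R}$-divisors to integral ones, the argument is a standard combination of the preceding lemma, the Neron--Severi torsion result, and classical height theory on abelian varieties.
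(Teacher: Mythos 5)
Your proposal matches the paper's intended argument: the paper simply invokes the preceding lemma giving $\alpha\colon Y\to A$ together with the proof of \cite[Theorem B.5.9]{hs}, and you have fleshed out exactly those steps, including the sensible reductions to an integral, algebraically trivial $E$ (via flatness of $\mathbb{R}$ over $\mathbb{Z}$ and Matsusaka's theorem) and the passage to the normalization to meet the lemma's normality hypothesis. One small technical slip: to obtain $h_{\alpha^{*}H_{A}} \leq n\,h_{H} + O(1)$ on \emph{all} of $Y(K)$ you should take $n$ large enough that $nH - \alpha^{*}H_{A}$ is ample (or at least base-point free), not merely effective, since effectivity alone only bounds the associated height from below away from the support of the divisor; this stronger statement holds automatically because $H$ lies in the interior of the ample cone.
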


\begin{ack}
I would like to thank Tomohide Terasoma for giving me many valuable suggestions.
I thank Kaoru Sano for introducing me to the subject of the arithmetic degree
and for many helpful discussions.
I wish to thank Shu Kawaguchi for his comments and careful reading of a manuscript of this paper.
I thank Joseph H. Silverman for his comments. 
The author is supported by the Program for Leading Graduate Schools, MEXT, Japan.
\end{ack}

\end{document}